\newcommand{\B}{\mathcal{B}}
\newcommand{\Q}{\mathcal{Q}}
\renewcommand{\L}{\mathcal{L}}
\renewcommand{\S}{\mathcal{S}}
\renewcommand{\vector}[1]{\begin{pmatrix}#1\end{pmatrix}}
\renewcommand{\O}{\mathcal{O}}
\newcommand{\R}{\mathbb{R}}
\newcommand{\ub}[1]{\underline{#1}}
\newcommand{\etal}{\textit{et al.\ }}
\newcommand{\AS}[1]{\mathfrak{#1}}
\renewcommand{\vec}[1]{\bm{#1}}
\renewcommand{\d}{\mathrm{d}}
\newcommand*\centermathcell[1]{\omit\hfil$\displaystyle#1$\hfil\ignorespaces}
\providecommand{\keywords}[1]{\textbf{\textit{Keywords:}} #1}
\providecommand{\license}{\textbf{\textit{License:}} CC-BY-NC-ND\footnote{https://creativecommons.org/licenses/by-nc-nd/3.0/}}
\DeclareMathOperator{\diag}{diag}
\newtheorem{theorem}{Theorem}
\newtheorem{corollary}{Corollary}
\colorlet{gp}{black}
\begin{document}

\title{Derivation and Analysis of Lattice Boltzmann Schemes for the Linearized Euler Equations}
\author[1,2]{Philipp Otte\thanks{otte@mathcces.rwth-aachen.de}}
\author[1]{Martin Frank\thanks{frank@mathcces.rwth-aachen.de}}
\affil[1]{Department of Mathematics, RWTH Aachen University, Schinkelstr. 2, 52062 Aachen, Germany}
\affil[2]{German Research School for Simulation Sciences GmbH, Schinkelstr. 2a, 52062 Aachen, Germany}
\date{21.12.2015}

\maketitle

\begin{abstract}
We derive Lattice Boltzmann (LBM) schemes to solve the Linearized Euler Equations in 1D, 2D, and 3D with the future goal of coupling them to an LBM scheme for Navier Stokes Equations and an Finite Volume scheme for Linearized Euler Equations.
The derivation uses the analytical Maxwellian in a BGK model.
In this way, we are able to obtain second-order schemes.
In addition, we perform an $L^2$-stability analysis.
Numerical results validate the approach.
\end{abstract}

{\setlength{\parindent}{0cm}\keywords{Lattice Boltzmann Method, Finite Discrete Velocity Models, Linearized Euler Equations, Asymptotic Analysis, Stability}
\medskip
\\
\license}

\section{Introduction}
\label{sec: Introduction}

In the field of computational aeroacoustics (CAA) techniques from computational fluid dynamics are used to predict aeroacoustic phenomena.
In a project with an industry partner, we are investigating the aeroacoustic far-field generated by highly vortical flows streaming through a flat plate silencer built from porous media.
Hasert \cite{DissManuel} identified three different length-scales within this setting: 1.\ the size of pores within the porous medium ($\O(10 \mu m)$); 2.\ the length of the vortical flow ($\O(mm)$); and 3.\ the dimension of the inviscid acoustic far-field ($\O(m)$).
In order to resolve the acoustic effects of the porous medium properly, the Lattice Boltzmann method (LBM) is used to solve the Navier Stokes Equation (NSE).
Since within the acoustic far-field viscosity can be neglected, we simulate the far-field using the Linearized Euler Equation (LEE).
We plan to use Finite Volume methods (FVM) for solving the LEE, due to their conservation form and the large length-scale of the acoustic far-field.
The latter point is important for keeping computational complexity manageable.
In order to follow this approach, a coupling of the kinetic LBM for the NSE and the macroscopic FVM for the LEE is necessary.
\medskip
\\
This coupling itself introduces two difficulties.
First, we nee to couple the viscous NSE with the inviscid LEE.
Here, the problem is that we expect an abrupt change between those models to introduce non-physical effects such as spurious reflections of sound waves.
We plan to use a smooth transition model similar to \cite{DegondJin2005} and \cite{Degond2005} smearing the change in viscosity over a buffer zone in order to minimize these non-physical effects. 
Second, we need to couple the mesoscopic LBM with the macroscopic FVM.
Here, the translation from mesoscopic particle density functions to macroscopic quantities can be done using moments while the other way round is non-trivial.
Again, we are especially interested in avoiding non-physical effects such as the above mentioned spurious reflections of sound waves.
In order to be able to handle both problems separately, we decided to split this coupling of both model and method into two steps: 1.\ switch the model from NSE to LEE; and 2.\ switch the method from LBM to FVM.
This provides us with the possibility to separate the derivation of a smooth transition model for the change from NSE to LEE  from the choice of appropriate translations of macroscopic to mesoscopic quantities.
By this, we gain the freedom to choose approaches ideal for each problem.
The drawback of this two-step approach is that it introduces a buffer-zone in which we need to solve the LEE using LBM.
In this paper we present an intermediate step in the derivation of the needed LBM of the LEE used in the aforementioned buffer-zone: LBM schemes for the LEE without background velocity.
Research on how to extend these LBM schemes to incorporate background velocities is already undergoing.
\medskip
\\
During the last 20 years, lots of research on the LBM was undertaken.
He and Luo \cite{HeLuo1997} showed that the LBM is not only a generalization of Lattice Gas Automata (LGA) but is a discretization of the Boltzmann Bhatnagar-Gross-Krook (BGK) equation.
In \cite{Junk2005} Junk \etal performed a rigorous analysis of the LBM using well-established tools from analysis of Finite Difference Methods (FDM).
For the problem of stability of LBM, different approaches were investigated: 1. direct von Neumann analysis of the linearized LBM (Sterling and Chen \cite{SterlingChen96}, Lallemand and Luo \cite{Lallemand2000a}); 2. entropic LBM with equilibrium distributions that admit an \textit{H}-theorem (Chen and Teixeira \cite{HTheoremInstability}, Karlin \etal \cite{EntropyFunctionsLBM}); 3. rigorous stability analysis with respect to a weighted $L^2$-norm (Banda \etal \cite{banda2006}, Junk and Yong \cite{JunkYong2009}, Junk and Yang \cite{JunkYang2009}); and 4. application of concepts from nonequilibrium thermodynamics (Yong \cite{Yong2009862}).
Bernsdorf \etal \cite{bernsdorf1999comparison} first showed the suitability of the LBM for flows through complex geometries.
In addition, Buick \etal \cite{buick1998lattice}, Dellar \cite{dellar2001bulk}, Crouse \etal \cite{crouse2006fundamental}, Lallemand and Luo \cite{Lallemand2000a} \cite{Lallemand2003}, and Marie \etal \cite{Marie2009} analyzed, tested, and discussed the LBM as tool for acoustic simulations.
Based on the capability of the LBM for complex geometries and acoustics, Hasert \etal \cite{hasert2011towards} and Hasert \cite{DissManuel} used the LBM enhanced by a sub-grid model to resolve the acoustic field generated by a flow through a porous medium.
By using the LBM, they were able to directly simulate the aeroacoustic contributions of the individual pores.
Independent of this, in the field of continuous analysis of the Boltzmann equation Bardos \etal \cite{Bardos2000} derived the Linearized Euler equations for monoatomic gases as limit of the continuous Boltzmann equation in acoustic scaling.
This paper was the starting point of the derivations presented in this paper.
The use of LEE for CAA offers a resource-saving alternative over the classical use of NSE due to their simpler, linear structure.
Mankbadi \etal \cite{mankbadi1998use} used the LEE for simulation of supersonic jet noise.
Further studies of the acoustic capabilities of the LEE were for example taken out by Bailly and  Juvé \cite{bailly2000numerical} and Bogey \etal \cite{bogey2002computation}.
Roller \etal \cite{roller2005} showed that in a hybrid approach the LEE are a well-suited and efficient method for simulating the acoustic far-field.
\medskip
\\
This work is structured as follows.
First, in section \ref{sec: Theoretical Background} we briefly present the Linearized Euler Equations without background velocity as used in this paper and briefly recapitulate the basics of the Boltzmann Equation.
In section \ref{sec: lee as limit of boltzmann equation}, we adapt the results by Bardos \etal \cite{Bardos2000} to the LEE for monoatomic gases as used in this paper.
Based on this, we then derive the semi-discrete Finite Discrete Velocity Models for monoatomic gases in section \ref{sec: FDVM} and generalize these results to polyatomic gases in section \ref{sec: FDVM Poly}.
In these sections, we will also post necessary conditions on the velocity models.
In sections \ref{sec: LBE} and \ref{sec: Analysis of the LBE}, we then present the fully discrete Lattice Boltzmann Equation and perform an analysis of consistency and stability of this equation.
Based on the necessary conditions derived in sections \ref{sec: FDVM} and \ref{sec: FDVM Poly}, in section \ref{sec: examples of velocity sets} we then present velocity models for monoatomic and diatomic gases respectively.
In addition, stability of the LBM for these velocity models is analyzed using the results from section \ref{sec: Analysis of the LBE}.
Then, numerical results are presented and discussed in section \ref{sec: Numerical Results}.
Finally, in section \ref{sec: conclusions} we wrap up the results of this paper.

\section{Theoretical Background}
\label{sec: Theoretical Background}

\subsection{The Linearized Euler Equations}
\label{sec: LEE}

In the field of Fluid Dynamics the compressible Euler Equations describe inviscid flows.
In settings in which the fluid flow is dominated by a constant background flow the Euler Equations can be linearized around this flow to reduce their complexity.
Assume a constant background flow with density $\rho_0$, temperature $\theta_0$, and velocity $\vec{u}_0$.
Linearization of the Euler Equations around this background flow then yields the Linearized Euler Equations (LEE).
In this setting the macroscopic variables of the fluid density $\rho$, velocity $\vec{u}$, temperature $\theta$, and pressure $p$ are given as:
\begin{subequations}
\begin{align}
\rho &= \rho_0 + \epsilon \rho',\\
u &= \vec{u}_0 + \epsilon \vec{u}',\\
\theta &= \theta_0 + \epsilon \theta',\\
p &= p_0 + \epsilon p'.
\end{align}
\end{subequations}
where the primed variables represent fluctuations around the background flow and the parameter $\epsilon$ represents the scale of the fluctuations.
The velocities $\vec{u}$ and $\vec{u}'$ are $\vec{u} = u_x$ and $\vec{u}' = \vec{u}'_x$ respectively in the 1D case, $\vec{u} = ( u_x, u_y )^T$ and $\vec{u}' = ( u'_x, u'_y )^T$ respectively in the 2D case, and $\vec{u} = ( u_x, u_y, u_z )^T$ and $\vec{u}' = ( u'_x, u'_y, u'_z )^T$ respectively in the 3D case. 
The temperature $\theta$ used throughout this paper does not describe the temperature $T$ in Kelvin but the scaled temperature $\theta = T R_{specific}$ where $R_{specific}$ denotes the specific gas constant.
Since the pressure in an ideal gas is given by $p = \rho \theta$, for the fluctuations in pressure we have $p' = \rho_0 \theta' + \theta_0 \rho'$.
\medskip
\\
As already stated in the introduction, we focus on the LEE for flows without background velocity, i.e. $\vec{u}_0 = \vec{0}$.
This can either be the case if no background velocity is present, its magnitude is small enough to incorporate it into the fluctuations $\vec{u}'$, or by choice of an appropriate Galilean frame.
The assumption of a such a Galilean frame can be easily justified for the continuous case.
Under certain conditions, one might be able to implement such a Galilean frame for boundary-free problems using a moving lattice.
The dimensional LEE without background velocity are given as follows:
\begin{subequations}
\begin{align}
\partial_{\ub{t}} \ub{\rho}' + \ub{\rho}_0 \nabla_{\ub{\vec{x}}} \cdot \ub{\vec{u}}' &= 0,\\
\ub{\rho}_0 \partial_{\ub{t}} \ub{\vec{u}}' + \ub{\rho}_0 \nabla_{\ub{\vec{x}}} \ub{\theta}' + \ub{\theta}_0 \nabla_{\ub{\vec{x}}} \ub{\rho}' &= \vec{0},\\
\frac{1}{\gamma - 1} \ub{\rho}_0 \partial_{\ub{t}} \ub{\theta}' + \ub{\rho}_0 \ub{\theta}_0 \nabla_{\ub{\vec{x}}} \cdot \ub{\vec{u}}' &= 0.
\end{align}
\end{subequations}
Here, $\gamma$ denotes the adiabatic exponent of the gas which will be discussed in section \ref{sec: FDVM Poly}.
To obtain the non-dimensional variables $\rho_0$, $\rho'$, $\vec{u}'$, $\theta_0$, $\theta'$, $x$, and $t$ from the dimensional quantities $\ub{\rho_0}$, $\ub{\rho'}$, $\ub{\vec{u}'}$, $\ub{\theta_0}$, $\ub{\theta'}$, $\ub{x}$, and $\ub{t}$ the following conversions are used:
\begin{align*}
\ub{\rho_0} &= \rho_0^* \rho_0, & \ub{\rho'} &= \rho_0^* \rho', & \ub{\theta_0} &= \theta_0^* \theta_0,\\
\ub{\theta'} &= \theta_0^* \theta', & \ub{\vec{u}'} &= u^* \vec{u}', & u^* &= \frac{x^*}{t^*},\\
(u^*)^2 &= \theta_0^*, & \ub{x} &= x^* x, & \ub{t} &= t^* t,
\end{align*}
where $u^*$ is scalar.
The non-dimensional LEE are:
\begin{subequations}
\label{LEE}
\begin{align}
\partial_t \rho' + \rho_0 \nabla_{\vec{x}} \cdot \vec{u}' &= 0,\\
\rho_0 \partial_t \vec{u}' + \rho_0 \nabla_{\vec{x}} \theta' + \theta_0 \nabla_{\vec{x}} \rho' &= \vec{0},\\
\frac{1}{\gamma - 1} \rho_0 \partial_t \theta' + \rho_0 \theta_0 \nabla_{\vec{x}} \cdot \vec{u}' &= 0.
\end{align}
\end{subequations}
As important outcome of this dedimensionalization, we are free to choose the non-dimensional background density $\rho_0$ and the temperature $\theta_0$ independently of each other.
Therefore, a numerical scheme for non-dimensional flows with background density $\rho_0$ and temperature $\theta_0$ can be used to simulate dimensional flows with arbitrary dimensional background densities $\ub{\rho}_0$ and temperature $\ub{\theta}_0$.
In sections \ref{sec: examples of velocity sets} and \ref{sec: Numerical Results} we will make heavy use of this important property of the LEE without background velocity.

\subsection{The Boltzmann Equation}
\label{sec: Boltzmann Equation}

The Boltzmann equation describes fluids on the mesoscopic kinetic level.
This means, it does neither model the interactions of distinct particles (microscopic level) nor the evolution of macroscopic properties (macroscopic level), such as density or pressure, but models the probability of particles at certain position and velocity.
This is done via particle density functions defined as follows:
$$
F: \mathbb{T} \times \mathbb{X} \times \mathbb{V} \to \mathbb{R}_0^+.
$$
Here, $\mathbb{X} \subseteq \mathbb{R}^D$ denotes the spatial domain, $\mathbb{V} = \mathbb{R}^D$ the velocity-space, $\mathbb{T} \subseteq \mathbb{R}_0^+$ the time-frame of interest, and $D$ the dimension of the problem.
In this paper we restrict the spatial domain $\mathbb{X}$ to be periodic.
The classical Boltzmann equation is now given by:
\begin{equation}
\label{eq: cont Boltzmann}
\partial_t F + \vec{v} \cdot \nabla_{\vec{x}} F = \B(F,F)
\end{equation}
with initial data:
$$
F(0,\vec{x},\vec{v}) = F_0(\vec{x},\vec{v}) \geq 0
$$
and the Boltzmann collision operator $\B$.
To obtain the macroscopic variables of the gas we calculate the moments of the particle density function $F$.
These moments are defined as follows:
$$
\langle \zeta F \rangle = \int_\mathbb{V} \zeta(\vec{v}) F(\vec{v}) \d\vec{v}
$$
For further details please refer to \cite{Cercignani1988} and \cite{Bardos2000}.
It is important to note, that the classical Boltzmann equation is of dimension $2D+1$ and via the collision operator the equation is coupled for all velocities $\vec{v} \in \mathbb{V}$.
These two properties make straight forward strategies for solving the Boltzmann equation extremely expensive.

\section{The LEE for Monoatomic Gases as Limit of the Boltzmann Equation}
\label{sec: lee as limit of boltzmann equation}

Since the Boltzmann equation models monoatomic particles, only the LEE for monoatomic gases can easily be derived as a limit of the Boltzmann equation.
Bardos \etal in \cite{Bardos2000} proved that the LEE for monoatomic gases with $\rho_0 = \theta_0 = 1$ are a limit of the Boltzmann equation in acoustic scaling.
In this section, we go through this derivation and adapt it to arbitrary $\rho_0,\theta_0 >0$.
This is important as it allows for additional degrees of freedom in the derivation of finite discrete velocity models as presented in sections \ref{sec: FDVM}, \ref{sec: FDVM Poly}, and \ref{sec: examples of velocity sets}.
For further information on the derivation process refer to the original paper by Bardos \etal \cite{Bardos2000}.
\medskip
\\
The Boltzmann Equation in the acoustic scaling is given by:
\begin{equation}
\label{Eq: Cont Boltzmann acoustic scaling}
\partial_t F + \vec{v} \cdot \nabla_{\vec{x}} F = \frac{1}{\epsilon} \B(F,F),
\end{equation}
where $\epsilon$ denotes the Knudsen number.
As the LEE describe fluctuations around a background stream we transfer this approach to the Boltzmann Equation.
Thus, we analyze fluctuations around the spatially invariant background equilibrium
$$
M(\vec{v}) = \frac{\rho_0}{(2 \pi \theta_0)^{\frac{D}{2}}} \exp(-\frac{1}{2 \theta_0} |\vec{v}|^2).
$$
The moments of $M$ resemble the background flow with density $\rho_0$, temperature $\theta_0$, and velocity $\vec{u}_0=\vec{0}$.
We linearize the particle density function $F$:
$$
F_\epsilon = M G_\epsilon = M ( 1 + \epsilon g_\epsilon ),
$$
where $\epsilon M g_\epsilon$ represent the fluctuations around the background flow.
The macroscopic fluctuations are given by the following moments:
\begin{subequations}
\begin{align}
\rho' &= \langle g_\epsilon M \rangle,\\
\vec{u}' &= \frac{1}{\rho_0} \langle \vec{v} g_\epsilon M \rangle,\\
\theta' &= \frac{1}{\rho_0} \left( \frac{1}{D} \langle |\vec{v}|^2 g_\epsilon M \rangle - \theta_0 \langle g_\epsilon M \rangle \right).
\end{align}
\end{subequations}
With this linearization equation (\ref{Eq: Cont Boltzmann acoustic scaling}) can be rewritten as:
\begin{equation}
\label{Eq: Cont Boltzmann acoustic scaling fluctuations only}
\partial_t g_\epsilon + \vec{v} \cdot \nabla_{\vec{x}} g_\epsilon + \frac{1}{\epsilon} \L g_\epsilon = \Q(g_\epsilon,g_\epsilon),
\end{equation}
with an adapted collision operator $\Q$ and its linearized version $\L$.
Analogous to \cite{Bardos2000} we find that for $\epsilon \to 0$ the fluctuation $g$ takes the form (also cf. \cite{Cercignani1988}):
$$
g = \AS{a} + \vec{v} \cdot \AS{b} + \frac{1}{2} |\vec{v}|^2 \AS{c}.
$$
By comparison of the moments regarding $\left\{ M, M v_1,..., M v_D, \frac{1}{2} M |\vec{v}|^2\right\} $ of equation (\ref{Eq: Cont Boltzmann acoustic scaling fluctuations only}) with the LEE (\ref{LEE}) we derive the form:
\begin{equation}
\label{limit LEE}
g = \frac{1}{\rho_0} \rho' + \frac{1}{\theta_0} \vec{v} \cdot \vec{u}' + \theta' \left( \frac{1}{2\theta_0^2} |\vec{v}|^2 - \frac{D}{2\theta_0} \right).
\end{equation}
In the next section, we will use this Maxwellian as equilibrium distribution in the derivation of a linearized semi-discrete version of equation (\ref{Eq: Cont Boltzmann acoustic scaling}) for monoatomic gases.
Throughout this paper, we will refer to these semi-discrete models as Finite Discrete Velocity Models (FDVM) though in literature also the names discrete Boltzmann equation \cite{CGLBM}, differential form of the Lattice Boltzmann Equation \cite{SucciBook}, and Lattice Boltzmann Equation \cite{FELBM} are used.
We will only refer to the linearized fully-discrete version of equation (\ref{Eq: Cont Boltzmann acoustic scaling}) derived in section \ref{sec: LBE} as Lattice Boltzmann Equation (LBE).
We would like to stress, that this is the exact Maxwellian resembling the LEE.
This is a difference to the BGK models used in LBM for the NSE where not the exact Maxwellian is used as equilibrium distribution but for example a Taylor expansion of the exact Boltzmann-Maxwellian distribution (cf. \cite{HeLuo1997}).
Here, we avoid errors introduced by this expansion by using the exact Maxwellian as equilibrium in the BGK operator.

\section{Derivation of Finite Discrete Velocity Models for Monoatomic Gases}
\label{sec: FDVM}
In order to discretize eqaution (\ref{Eq: Cont Boltzmann acoustic scaling}) in velocity space, we follow the standard two-step approach.
First, we replace the Boltzmann collision operator $\B(F,F)$ by a BGK type collision opperator.
In BGK (Bhatnagar–Gross–Krook) type collision operators $J_{BGK}(g) = \frac{1}{\tau} \left( g^{eq}(g) - g \right)$ the particle density function $g$ is relaxed towards an equilibrium $g^{eq}$ scaled with the relaxation time $\tau$ (cf. Bhatnagar \etal \cite{BGK1945}).
Therefore, the structure of BGK type collision operators is a lot simpler than the structure of complex Boltzmann type collision operators.
Second, we restrict the infinite velocity space $\mathbb{V}$ to a finite and symmetric set of discrete velocities $\S = \left\{ \vec{c}_i \in \R^D: i=1..n\right\}$.
For a quantity $\zeta : \mathcal{S} \to \mathbb{R}$ the discrete moments are defined as follows:
$$
\langle \zeta \rangle_\mathcal{S} = \sum_{i=1}^n \zeta( \vec{c}_i ),
$$
where the subscript $\mathcal{S}$ indicates the dependence on the finite set of discrete velocities $\mathcal{S}$.
For the sake of simplicity, we will use the notation $\langle \vec{v} \zeta \rangle_{\mathcal{S}} = \sum_{i=1}^n c_i \zeta_i$ and so on in cases where $\zeta$ is a function depending on the velocities.
With this, for each discrete velocity $\vec{c}_i \in \S$ we have the Finite Discrete Velocity Model (FDVM) equation:
\begin{equation}
\label{eq: FDVM equation}
\partial_t g_i + \vec{c}_i \cdot \nabla_{\vec{x}} g_i = \frac{1}{\epsilon \tau} \left( g_i^{eq} - g_i \right),
\end{equation}
where $g_i$ denotes the particle density function corresponding to velocity $\vec{c}_i$, $g_i^{eq}$ denotes an equilibrium function corresponding to velocity $\vec{c}_i$, and $\tau$ denotes the relaxation time introduced by the BGK approximation.
We keep both relaxation parameters $\epsilon$ and $\tau$ as we will use the first as step size in the LBM discretization (cf. section \ref{sec: LBE}) and the latter as parameter allowing for second-order consistency of the LBM schemes (cf. section \ref{sec: Analysis of the LBE}).
\medskip
\\
This discretization leaves the following choices open:
\begin{enumerate}
\item the finite set of discrete velocities $\S$;
\item the relaxation time $\tau$; and
\item the equilibrium distributions $g_i^{eq}$.
\end{enumerate} 
The choices of $\S$ and the equilibrium distributions $g_i^{eq}$ are coupled.
For the equilibrium distributions $g_i^{eq}$ we use the limit derived in the fully continuous case in section \ref{sec: lee as limit of boltzmann equation}.
So for $\vec{c}_i \in \S$ we have:
\begin{equation}
\label{eq: equilirium}
g_i^{eq} = g_i^{eq}(\rho',\vec{u}',\theta') = \left( \frac{1}{\rho_0} \rho' + \frac{1}{\theta_0} \vec{c}_i \cdot \vec{u}' + \theta' \left( \frac{1}{2\theta_0^2} |\vec{c}_i|^2 - \frac{D}{2\theta_0} \right) \right) f^*_i.
\end{equation}
The function $f^*$ with $f^*_i = f^*(\vec{c}_i)$ was introduced to replace the continuous Maxwellian $M$ in the calculation of moments.
This function can be introduced into the equilibrium and thereby into the particle density functions $g_i$.
This needs to reflect in the initial and boundary conditions.
Due to its construction, the function $f^*$ has to be even, symmetric, and positive.
In addition, the following similarity condition has to hold for certain pairs of discrete moments of $f^*$ and continuous moments of $M$:
$$
\langle \zeta f^* \rangle_\mathcal{S} =  \langle  \zeta M \rangle  \hbox{ for } \zeta \in \left\{ \vec{v}\mapsto 1, \vec{v}\mapsto \frac{1}{2} |\vec{v}|^2, \vec{v}\mapsto v_\alpha v_\beta,  \vec{v}\mapsto \frac{1}{2} |\vec{v}|^2 v_\alpha v_\beta, \vec{v}\mapsto \frac{1}{4}|\vec{v}|^4 \right\}.
$$
Hence, only those finite sets of discrete velocities $\S$ fulfilling the conditions above and for which such a function $f^*$ can be found may be considered.
For every pair of a finite set of discrete velocities $\S$ and a function $f^*$ fulfilling the conditions above we get the following equality of the discrete and the continuous moments:
\begin{subequations}
\begin{align}
\rho' &= \langle g \rangle_\S = \langle g_\epsilon M \rangle,\\
\vec{u}' &= \frac{1}{\rho_0} \langle \vec{v} g \rangle_\S = \frac{1}{\rho_0} \langle \vec{v} g_\epsilon M \rangle,\\
\theta' &= \frac{1}{\rho_0} \left( \frac{1}{D} \langle |\vec{v}|^2 g \rangle_\S - \theta_0 \langle g \rangle_\S \right) = \frac{1}{\rho_0} \left( \frac{1}{D} \langle |\vec{v}|^2 g_\epsilon M \rangle - \theta_0 \langle g_\epsilon M \rangle \right).
\end{align}
\end{subequations}
Therefore, the choices of the velocity set $\S$ and the equilibrium distributions $g_i^{eq}$ are coupled.
The choice of the relaxation time $\tau$ is delayed until section \ref{sec: Analysis of the LBE}, where we will fix $\tau = \frac{1}{2}$ in order to achieve second-order consistency.
\medskip
\\
It is important to understand that equation (\ref{eq: FDVM equation}) is coupled for $i=1..n$ via the equilibrium term.
Therefore, solving these equations independently is not possible.

\section{Generalization of the Finite Discrete Velocity Ansatz for Polyatomic Gases}
\label{sec: FDVM Poly}

The derivation process shown above describes monoatomic gases, i.e.\ noble gases, only.
In this section we will generalize the derivation process of FDVM for polyatomic gases.
The LEE without background velocity for polyatomic gases are of the following form:
\begin{align*}
\partial_t \rho' + \rho_0 \nabla_{\vec{x}} \cdot \vec{u}' &= 0,\\
\rho_0 \partial_t \vec{u}' + \rho_0 \nabla_{\vec{x}} \theta' + \theta_0 \nabla_{\vec{x}} \rho' &=\vec{0},\\
\frac{1}{\gamma-1} \rho_0 \partial_t \theta' + \rho_0 \theta_0 \nabla_{\vec{x}} \cdot \vec{u}' &= 0.
\end{align*}
For calorically ideal gases the adiabatic exponent $\gamma$ is given by the relation $\gamma = \frac{M+2}{M}$ where $M$ denotes the number of degrees of freedom -- translational, rotational, and vibrational -- of the gas considered (cf. \cite{toro1999riemann}).
These additional degrees of freedom lead to an additional internal energy not present in the monoatomic case.
A monoatomic gas in $D$ dimensions only has $D$ translational degrees of freedom while a diatomic gas has $D$ translational and $D-1$ rotational degrees of freedom, resulting in $M=2D-1$.
Since 98\%-99\% of ambient air are made up by nitrogen and oxygen molecules we can model ambient air as diatomic gas.
\medskip
\\
When moving from monoatomic to polyatomic gases we have to account for the additional internal energy in the molecule.
We write the energy equation in terms of the energy density $\mathcal{E}$ and the energy flux $\vec{\mathcal{F}}$:
\begin{align*}
\partial_t \mathcal{E} + \nabla_{\vec{x}} \cdot \vec{\mathcal{F}} &= 0.
\end{align*}
Among others, Kataoka and Tsutahara \cite{kataoka2004} and Dellar \cite{dellar2008two} proposed to amend the energy density $\mathcal{E}$ and the energy flux $\vec{\mathcal{F}}$ using additional energies $\beta_i$:
\begin{alignat*}{3}
\mathcal{E} &=& \centermathcell{\frac{1}{\gamma -1} \left( \rho_0 \theta' + \theta_0 \rho' \right)} =& \langle \frac{1}{2} \left( |\vec{v}|^2 + \beta \right) g \rangle_\mathcal{S},\\
\vec{\mathcal{F}} &=&\centermathcell{\frac{\gamma}{\gamma-1} \rho_0 \theta_0 \vec{u}' } =&  \langle \frac{1}{2} \left( |\vec{v}|^2 + \beta \right) \vec{v} g \rangle_\mathcal{S},
\end{alignat*}
where the function $\beta$ of the velocities is predefined, positive, and fulfills the same symmetries as $f^*$.
With this we can now try to construct an equilibrium distribution for which we resemble the LEE for a polyatomic gas.
We assume $g^{eq}$ to be of the form:
\begin{align*}
g^{eq}_i &= \left( \AS{a}_1 \rho' + \AS{a}_2 \theta' + \AS{b} \vec{c}_i \cdot \vec{u}' + \frac{1}{2} |\vec{c}_i|^2 \left( \AS{c}_1 \rho' + \AS{c}_2 \theta' \right) \right) f^*_i
\end{align*}
Here it is important that due to the definition of the energy density $\mathcal{E}$ the temperature is now given by:
\begin{align*}
\theta' &= \frac{1}{\rho_0} \left( (\gamma-1) \langle \frac{1}{2} ( |\vec{v}|^2 + \beta ) g \rangle_\mathcal{S} - \theta_0 \rho' \right)
\end{align*}
By plugging $g^{eq}$ into the scaled FDVM equation (\ref{eq: FDVM equation}) and taking the moments with respect to $1$, $\vec{v}$, and $\frac{1}{2} \left(|\vec{v}|^2 + \beta \right)$ we can deduce 8 independent constraints on the choice of $\AS{a}_1$, $\AS{a}_2$, $\AS{b}$, $\AS{c}_1$, $\AS{c}_2$, $f^*$, and $\beta$ plus non-negativity constraints on $f^*$ and $\beta$.
An additional two constraint equations are given by taking the moments with respect to $1$ and $\frac{1}{2} \left( |\vec{v}|^2 + \beta \right)$ of $f^*$ which have to resemble the background density and the background energy density respectively.
One can now solve this system of constraint equations for deriving an equilibrium distribution along with a function $\beta$ for which the according FDVM model solves the polyatomic LEE.
In section \ref{sec: examples of velocity sets}, two schemes for diatomic gases in 2D and 3D are derived and analyzed.

\section{Derivation of the Lattice Boltzmann Equation}
\label{sec: LBE}

Using standard methodology (cf. \cite{Junk2005}) to fully discretize the semi-discrete FDVM equation (\ref{eq: FDVM equation}), we obtain the Lattice Boltzmann Equation (LBE):
\begin{equation}
\label{eq: LBE}
g_i(t+\epsilon,\vec{x}+\epsilon \vec{c}_i ) - g_i(t,\vec{x}) = \frac{1}{\tau} \left( g_i^{eq}(t,\vec{x}) - g_i(t,\vec{x}) \right).
\end{equation}
This means we get the fully discrete and explicit update scheme:
\begin{equation}
\label{eq: LBE update scheme}
g_i(t+\epsilon,\vec{x}+\epsilon \vec{c}_i ) = \left( 1-\frac{1}{\tau} \right) g_i(t,\vec{x})  + \frac{1}{\tau} g_i^{eq}(t,\vec{x}).
\end{equation}
This is the Lattice Boltzmann scheme.
Due to the linear structure of the equilibrium distributions $g_i^{eq}$ equation (\ref{eq: LBE update scheme}) can be written in linear form:
\begin{equation}
\label{eq: LBE linear equation}
\underline{\vec{g}}(t+\epsilon, \vec{x}+\epsilon \vec{c} ) = H(\tau) \underline{\vec{g}}(t,\vec{x}),
\end{equation}
with the following definitions:
$$
\underline{\vec{g}}(t,\vec{x}) = \left( g_1(t,\vec{x}), \dots, g_n(t,\vec{x}) \right)^T,\\
$$
$$
\underline{\vec{g}}(t+\epsilon,\vec{x}+\epsilon \vec{c}) = \left( g_1(t+\epsilon,\vec{x}+\epsilon \vec{c}_1), \dots, g_n(t+\epsilon,\vec{x}+\epsilon \vec{c}_n) \right)^T,\\
$$
\begin{align*}
H_{ij}(\tau) &= \left( 1 - \frac{1}{\tau} \right) \delta_{ij} + \frac{1}{\tau} \bigg( \AS{a}_1 + \frac{1}{2} |\vec{c}_i|^2 \AS{c}_1\\
& + \AS{b} \vec{c}_i \cdot \vec{c}_j + \frac{1}{\rho_0} \left( \frac{\gamma - 1}{2} \left( |\vec{c}_j|^2 + \beta_j \right) - \theta_0 \right) \left( \AS{a}_2 + \frac{1}{2} |\vec{c}_i|^2 \AS{c}_2 \right) \bigg) f^*_i.
\end{align*}

\section{Analysis of the LBE}
\label{sec: Analysis of the LBE}

In this section, we show second-order consistency of the LBE (\ref{eq: LBE}) and state conditions for $L^2$-stability of the method.
For analysis of consistency we follow the method of Junk \etal \cite{Junk2005}.
We assume a periodic domain $\Omega$ and for all discrete velocities $\vec{c}_i \in \mathcal{S}$ the following initial condition:
$$
g_i(t=0,\vec{x}) = g^{eq}( \rho'(t=0,\vec{x}), \vec{u}'(t=0,\vec{x}), \theta'(t=0,\vec{x}) ).
$$
In addition, we assume that the initial macroscopic conditions $\rho'(t=0,\vec{x})$, $\vec{u}'(t=0,\vec{x})$, and $\theta'(t=0,\vec{x})$ are independent of $\epsilon$.
We use the following regular expansions for the density $g_i$, the equilibrium $g_i^{eq}$, and the macroscopic variables $\rho'$, $\vec{u}'$, and $\theta'$:
\begin{align*}
g_i &= \sum_{i=0}^\infty \epsilon^i g_i^{(i)}, &&& g_i^{eq} &= \sum_{i=0}^\infty \epsilon^i g_i^{eq(i)},\\
\rho' &= \sum_{i=0}^\infty \epsilon^i \rho'^{(i)}, &&& \vec{u}' &= \sum_{i=0}^\infty \epsilon^i \vec{u}'^{(i)}, &&& \theta' &= \sum_{i=0}^\infty \epsilon^i \theta'^{(i)}.
\end{align*}
Hence, the initial conditions for the expansion coefficients $g_i^{(k)}$ are given as follows:
\begin{align*}
g_i^{(0)}(t=0,\vec{x}) &= g^{eq}( \rho'(t=0,\vec{x}), \vec{u}'(t=0,\vec{x}), \theta'(t=0,\vec{x}) ),&\\
g_i^{(k)}(t=0,\vec{x}) &= 0 &\hbox{ for } k \geq 1.
\end{align*}
We now plug these expansions into the LBE (\ref{eq: LBE}) and Taylor-expand the left hand side.
Sorting the terms according to powers of $\epsilon$ results in the following set of equations:
\begin{subequations}
\begin{align}
\epsilon^0: && 0 &= \frac{1}{\tau} \left( g_i^{eq(0)}(t,\vec{x}) - g_i^{(0)}(t,\vec{x}) \right) \label{eq: taylor eps to 0}\\
\epsilon^1: && \vec{c}_i \cdot \nabla_{\vec{x}} g^{(0)}_i(t,{\vec{x}}) + \partial_t g^{(0)}_i(t,{\vec{x}}) &= \frac{1}{\tau} \left( g_i^{eq(1)}(t,{\vec{x}}) - g^{(1)}_i(t,{\vec{x}}) \right), \label{eq: taylor eps to 1}\\
\epsilon^2: && \partial_t g^{(1)}_i(t,{\vec{x}}) + \vec{c}_i \cdot \nabla_{\vec{x}} g^{(1)}_i(t,{\vec{x}}) + \frac{1}{2} \vec{c}_i \otimes \vec{c}_i : \nabla_{\vec{x}} \nabla_{\vec{x}} g^{(0)}_i(t,{\vec{x}})\span\span\span\span\span\span\nonumber\\
&& + \partial_t \vec{c}_i \cdot \nabla_{\vec{x}} g^{(0)}_i(t,{\vec{x}}) +\frac{1}{2} \partial_t^2 g_i^{(0)}(t,{\vec{x}}) &= \frac{1}{\tau} \left( g_i^{eq(2)}(t,{\vec{x}}) - g^{(2)}_i(t,{\vec{x}}) \right). \label{eq: taylor eps to 2}
\end{align}
\end{subequations}
Here, we use tensor notation where $\otimes$ denotes the standard tensor product, $\cdot$ denotes a first order, and $:$ a second-order tensor contraction.
From equation (\ref{eq: taylor eps to 0}) we find: 
$$
g_i^{eq(0)}(t,{\vec{x}}) = g_i^{(0)}(t,{\vec{x}}).
$$
By computing the moments corresponding to $1$, $\vec{v}$, and $\frac{1}{2} \left( |\vec{v}|^2 + \beta \right)$ of equation (\ref{eq: taylor eps to 1}) we find:
\begin{align*}
\partial_t \rho'^{(0)} + \rho_0 \nabla_{\vec{x}} \cdot \vec{u}'^{(0)}  &= 0,\\
\rho_0 \partial_t \vec{u}'^{(0)}  + \rho_0 \nabla_{\vec{x}} \theta'^{(0)} + \theta_0 \nabla_{\vec{x}} \rho'^{(0)} &= \vec{0},\\
\frac{1}{\gamma-1} \rho_0 \partial_t \theta'^{(0)}  + \rho_0 \theta_0 \nabla_{\vec{x}} \cdot \vec{u}'^{(0)}  &= 0.
\end{align*}
Next, we analyze the moments corresponding to\\
$\zeta \in \left\{\vec{v}\to 1, \vec{v}\to v_1,..., \vec{v}\to v_D, \vec{v}\to \frac{1}{2} \left( |\vec{v}|^2 + \beta \right) \right\}$ of equation (\ref{eq: taylor eps to 2}).
The steps of this analysis can be found in \ref{app: analysis of the LBE}.
This analysis gives the following set of moments of equation (\ref{eq: taylor eps to 2}):
\begin{subequations}
\begin{equation}
\label{eq: eps to 2 zeroth}
0 = \partial_t \rho'^{(1)} + \rho_0 \nabla_{\vec{x}} \cdot \vec{u}'^{(1)},
\end{equation}
\medskip
\begin{equation}
\label{eq: eps to 2 first}
\begin{aligned}
\vec{0}=& \rho_0 \partial_t \vec{u}'^{(1)} + \rho_0 \nabla_{\vec{x}} \theta'^{(1)} + \theta_0 \nabla_{\vec{x}} \rho'^{(1)}\\
+& \left( \frac{1}{2} - \tau \right)  \nabla_{\vec{x}} \nabla_{\vec{x}} : \langle  \vec{v} \otimes \vec{v} \otimes \vec{v} g^{(0)} \rangle_\mathcal{S}\\
+&\left( \frac{1}{2} - \tau \right) \partial_t \nabla_{\vec{x}} \left( \rho_0 \theta'^{(0)} + \theta_0 \rho'^{(0)} \right),
\end{aligned}
\end{equation}
\medskip
\begin{equation}
\label{eq: eps to 2 second}
\begin{aligned}
0 =& \frac{1}{\gamma-1} \rho_0 \partial_t \theta'^{(1)} + \frac{1}{\gamma-1} \theta_0 \partial_t \rho'^{(1)} + \frac{\gamma}{\gamma-1} \rho_0 \theta_0 \nabla_{\vec{x}} \cdot \vec{u}'^{(1)}\\
+& \left( \frac{1}{2} - \tau \right) \frac{\gamma}{\gamma-1} \rho_0 \theta_0 \partial_t \nabla_{\vec{x}} \cdot \vec{u}'^{(0)}\\
+& \left( \frac{1}{2} - \tau \right) \nabla_{\vec{x}} \nabla_{\vec{x}} : \langle \frac{1}{2} \left( |\vec{v}|^2 + \beta \right) \vec{v} \otimes \vec{v} g^{(0)} \rangle_\mathcal{S}.
\end{aligned}
\end{equation}
\end{subequations}
With the choice $\tau = \frac{1}{2}$, equations (\ref{eq: eps to 2 zeroth}) - (\ref{eq: eps to 2 second}) resemble the LEE for the macroscopic variables $\rho'^{(1)}$, $\vec{u}'^{(1)}$, and $\theta'^{(1)}$.
The LEE together with the initial conditions $\rho'^{(1)}(t=0,\vec{x}) = 0$, $\vec{u}'^{(1)}(t=0,\vec{x}) = \vec{0}$, and $\theta'^{(1)}(t=0,\vec{x}) = 0$ result in the global solution for $t \geq 0$:
$$
\rho'^{(1)}(t,\vec{x}) = 0,\ \ \vec{u}'^{(1)}(t,\vec{x}) = \vec{0}, \ \ \theta'^{(1)}(t,\vec{x}) = 0.
$$
Therefore, for $\tau = \frac{1}{2}$ the LBM scheme is of at least second-order consistency.
\medskip
\\
In order to prove convergence one also needs to prove stability.
Here, we discuss $L^2$-stability utilizing a von Neumann analysis of the linear update equation (\ref{eq: LBE linear equation}) as described by LeVeque \cite{LeVeque2007} and Trefethen \cite{Trefethen1996}.
We would like to emphasize that Trefethen's work is the most thorough work on $L^2$-stability to our notice, incorporating important aspects of stability usually neglected in the literature.
In addition, we note that our collision operators for the velocity sets presented in \ref{sec: examples of velocity sets} also obtain stability structures as described by Banda \etal \cite{banda2006}, Junk and Yong \cite{JunkYong2009}, and Yong \cite{Yong2009862}.
\medskip
\\
Due to the results above, we fix $\tau = \frac{1}{2}$.
First, equation (\ref{eq: LBE linear equation}) is Fourier transformed in space to remove spatial dependency, resulting in the following equation:
$$
\underline{\hat{\vec{g}}}( \vec{k}, t + \epsilon ) = \Gamma( \vec{k}, \epsilon ) \underline{\hat{\vec{g}}}( \vec{k}, t ),
$$
with the wave vector $\vec{k}$, the Fourier transformed vector $\underline{\hat{\vec{g}}}$, and the matrix:
$$
\Gamma( \vec{k}, \epsilon ) = D( \vec{k}, \epsilon ) H( \tau = \frac{1}{2} ),
$$
where the matrix $D$ resulting from the spatial shift of the left hand side in equation (\ref{eq: LBE linear equation}) is of the form:
$$
D_{mn}( \vec{k}, \epsilon ) = \exp( -i \vec{k} \cdot \vec{c}_m \epsilon) \delta_{mn}.
$$
The following Theorem is used for proving stability of the LBM schemes:
\begin{theorem}[$L^2$-stability of LBM schemes]
\label{th: l2stability of LBM schemes}
If the matrix $\Gamma(\vec{k},\epsilon)$ fulfills the following conditions:
\begin{enumerate}
\item $\Gamma(\vec{k},\epsilon)$ is regular for all $\vec{k} \epsilon \in [-\pi,\pi]^D$,\label{cond: lemma regular}
\item the spectral radius $\rho(\Gamma(\vec{k},\epsilon)) \leq 1$ for all $\vec{k} \epsilon \in [-\pi,\pi]^D$, and \label{cond: lemma spectral radius}
\item there exists a finite constant $C$, s.t. $\kappa( V(\vec{k},\epsilon) ) \leq C$ for all $\vec{k} \epsilon \in [-\pi,\pi]^D$, where $\kappa$ denotes the condition number corresponding to the $L^2$-norm and $V(\vec{k},\epsilon)$ denotes the eigenvector matrix of $\Gamma(\vec{k},\epsilon)$.\label{cond: lemma condition number}
\end{enumerate} 
Then the according LBM scheme is $L^2$-stable, i.e. the $L^2$-norm of the solution does not explode in finite time.
\end{theorem}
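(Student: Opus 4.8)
The plan is to reduce $L^2$-stability in physical space to a uniform-in-$\vec{k}$ bound on the powers $\Gamma(\vec{k},\epsilon)^N$ in Fourier space, and then to obtain that bound by diagonalizing $\Gamma$ and invoking the three hypotheses in turn. First I would use Parseval's identity: since the spatial domain is periodic, the (discrete) Fourier transform is, up to a normalization that cancels, an isometry, so that $\| \underline{\vec{g}}(N\epsilon,\cdot) \|_{L^2}^2 = \sum_{\vec{k}} \| \underline{\hat{\vec{g}}}(\vec{k}, N\epsilon) \|_2^2$. Iterating the update (\ref{eq: LBE linear equation}) in its Fourier form gives $\underline{\hat{\vec{g}}}(\vec{k},N\epsilon) = \Gamma(\vec{k},\epsilon)^N \underline{\hat{\vec{g}}}(\vec{k},0)$, whence
$$
\| \underline{\vec{g}}(N\epsilon,\cdot) \|_{L^2}^2 \;\leq\; \Big( \sup_{\vec{k}\epsilon \in [-\pi,\pi]^D, \, N \geq 0} \| \Gamma(\vec{k},\epsilon)^N \|_2^2 \Big)\, \| \underline{\vec{g}}(0,\cdot) \|_{L^2}^2 .
$$
It therefore suffices to bound $\| \Gamma(\vec{k},\epsilon)^N \|_2$ by a constant independent of both $N$ and $\vec{k}$.

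For that uniform bound I would diagonalize. By condition \ref{cond: lemma regular} together with the existence of the eigenvector matrix $V(\vec{k},\epsilon)$ of condition \ref{cond: lemma condition number}, the decomposition $\Gamma(\vec{k},\epsilon) = V(\vec{k},\epsilon)\, \Lambda(\vec{k},\epsilon)\, V(\vec{k},\epsilon)^{-1}$ is well-defined, with $\Lambda$ the diagonal matrix of eigenvalues. Then $\Gamma^N = V \Lambda^N V^{-1}$, and submultiplicativity of the spectral norm yields
$$
\| \Gamma(\vec{k},\epsilon)^N \|_2 \;\leq\; \| V \|_2 \, \| \Lambda^N \|_2 \, \| V^{-1} \|_2 \;=\; \kappa_2(V(\vec{k},\epsilon))\, \| \Lambda(\vec{k},\epsilon) \|_2^N .
$$
Because $\Lambda$ is diagonal, $\| \Lambda \|_2 = \rho(\Gamma(\vec{k},\epsilon)) \leq 1$ by condition \ref{cond: lemma spectral radius}, so $\| \Lambda \|_2^N \leq 1$ for every $N$; and condition \ref{cond: lemma condition number} bounds $\kappa_2(V(\vec{k},\epsilon)) \leq C$ uniformly in $\vec{k}$. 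Combining these gives $\| \Gamma(\vec{k},\epsilon)^N \|_2 \leq C$ for all $N$ and all $\vec{k}\epsilon \in [-\pi,\pi]^D$, and substituting into the Parseval estimate yields $\| \underline{\vec{g}}(N\epsilon,\cdot) \|_{L^2} \leq C \| \underline{\vec{g}}(0,\cdot) \|_{L^2}$, i.e.\ the solution does not explode in finite time.

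The genuinely delicate point — and the reason all three hypotheses are needed rather than the spectral bound alone — is that $\rho(\Gamma) \leq 1$ by itself does \emph{not} control $\| \Gamma^N \|$: for a non-normal amplification matrix the powers can grow transiently even when every eigenvalue lies in the closed unit disk. The diagonalization decouples the size of the powers into the eigenvalue factor $\rho^N$ and the conditioning factor $\kappa_2(V)$, and it is precisely condition \ref{cond: lemma condition number} that tames the non-normality. The hard part, when this theorem is later applied to the concrete velocity sets of section \ref{sec: examples of velocity sets}, will be verifying condition \ref{cond: lemma condition number} uniformly in $\vec{k}$: at wave numbers where two eigenvalues of $\Gamma(\vec{k},\epsilon)$ coalesce — typically near $\vec{k} = \vec{0}$ — the eigenvectors can become nearly parallel and $\kappa_2(V)$ can blow up, so securing a finite uniform bound $C$ there is the crux, with condition \ref{cond: lemma regular} ensuring that $\Gamma$ remains invertible and the spectral decomposition stays well-defined across the whole range of $\vec{k}$.
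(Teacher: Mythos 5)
Your proof is correct, but it takes a genuinely different route from the paper. The paper does not argue via power-boundedness at all: it invokes Trefethen's stability criterion based on the Kreiss Matrix Theorem (Theorem \ref{th: Kreiss}), and verifies its hypothesis by estimating the $\alpha$-pseudospectral radius with the Bauer--Fike theorem, $\rho_\alpha(\Gamma(\vec{k},\epsilon)) \leq \rho(\Gamma(\vec{k},\epsilon)) + \kappa(V(\vec{k},\epsilon))\,\alpha \leq 1 + \mathcal{O}(\alpha)$, which is where conditions \ref{cond: lemma spectral radius} and \ref{cond: lemma condition number} enter. You instead prove the estimate $\|\Gamma(\vec{k},\epsilon)^N\|_2 \leq \kappa(V(\vec{k},\epsilon))\,\rho(\Gamma(\vec{k},\epsilon))^N \leq C$ directly from the diagonalization and pass to physical space by Parseval. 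Your argument is more elementary and self-contained (no pseudospectra, no appeal to an external theorem) and actually delivers a stronger conclusion than the theorem claims: uniform power-boundedness, i.e.\ $\|\underline{\vec{g}}(t,\cdot)\|_{L^2} \leq C \|\underline{\vec{g}}(0,\cdot)\|_{L^2}$ for all time, not merely no blow-up in finite time. What the paper's route buys in exchange is contact with a necessary-and-sufficient criterion: the Kreiss/pseudospectral framework also tells one when such schemes are unstable, which is the context the authors want for the later velocity-set analysis. One shared imprecision worth noting: like the paper, you lean on condition \ref{cond: lemma regular} for the diagonalization, but regularity (invertibility of $\Gamma$) does not imply diagonalizability; what both proofs actually use is the implicit assumption in condition \ref{cond: lemma condition number} that a full eigenvector matrix $V(\vec{k},\epsilon)$ exists and is invertible. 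Your closing remark about $\kappa(V)$ degenerating at eigenvalue coalescence is apt and is precisely the practical obstruction in section \ref{sec: examples of velocity sets}.
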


The proof of Theorem \ref{th: l2stability of LBM schemes} uses the stability Theorem 4.11 stated by Trefethen in \cite{Trefethen1996}.
We restate this theorem here but adapt it to our notation.
\begin{theorem}[Stability via the Kreiss Matrix Theorem]
\label{th: Kreiss}
A linear, constant-coefficient finite difference formula is stable in the $L^2$-norm if and only if	
\begin{equation}
\rho_\alpha(\Gamma(\vec{k},\epsilon)) \leq 1 + \mathcal{O}(\alpha) + \mathcal{O}(\epsilon)
\end{equation}
for $\vec{k} \in \left[-\frac{\pi}{\epsilon},\frac{\pi}{\epsilon}\right]$ as $\alpha \to 0$ and $\epsilon \to 0$.
\end{theorem}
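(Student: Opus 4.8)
The plan is to prove the two directions of the equivalence separately, working in Fourier space where, by Parseval's identity, $L^2$-stability of the constant-coefficient scheme is equivalent to a uniform power bound on the amplification matrices: there exists a constant $C$ with $\| \Gamma(\vec{k},\epsilon)^{m} \| \leq C$ for all wave vectors $\vec{k} \in \left[ -\frac{\pi}{\epsilon}, \frac{\pi}{\epsilon} \right]$, all step sizes $\epsilon$, and all $m$ with $m\epsilon \leq T$. Here $\rho_\alpha$ denotes the $\alpha$-pseudospectral radius, the largest modulus attained on the $\alpha$-pseudospectrum $\{ z \in \mathbb{C} : \| (zI - \Gamma)^{-1} \| > \alpha^{-1} \}$. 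The first thing I would record is the standard equivalence between the stated bound $\rho_\alpha(\Gamma) \leq 1 + K\alpha$ and the Kreiss resolvent condition $\| (zI - \Gamma)^{-1} \| \leq K/(|z|-1)$ for $|z| > 1$, so that both implications can be routed through this resolvent estimate.

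For the direction ``stability $\Rightarrow$ pseudospectral bound'' I would expand the resolvent outside the unit disk as a Neumann series,
$$ (zI - \Gamma)^{-1} = \sum_{m=0}^{\infty} z^{-(m+1)} \Gamma^{m}, \qquad |z| > 1, $$
and insert the power bound $\| \Gamma^{m} \| \leq C$. Summing the geometric series yields $\| (zI - \Gamma)^{-1} \| \leq C/(|z|-1)$, which is exactly the Kreiss resolvent condition and hence the claimed bound on $\rho_\alpha$, uniformly in $\vec{k}$ and $\epsilon$.

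The substantive direction is the converse. Here I would represent the powers by a Cauchy integral over a circle of radius $r = 1 + 1/m$,
$$ \Gamma^{m} = \frac{1}{2\pi i} \oint_{|z| = r} z^{m} (zI - \Gamma)^{-1} \, \d z, $$
and estimate using the resolvent bound on that contour, where $|z|^{m} = (1+1/m)^{m} \leq e$ and $|z| - 1 = 1/m$. The naive estimate gives only $\| \Gamma^{m} \| = \O(m)$, which is insufficient over $m \leq T/\epsilon$ steps as $\epsilon \to 0$. The crux is to upgrade this to the dimension-dependent Kreiss bound $\| \Gamma^{m} \| \leq e\, n\, K$, where $n$ is the fixed number of discrete velocities, i.e.\ the matrix dimension; this is achieved via Spijker's lemma, which controls the total arc length of the curve $z \mapsto (zI - \Gamma)^{-1}$ along the contour by a quantity proportional to $n$ rather than to $m$. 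Since $n$ does not depend on $\vec{k}$ or $\epsilon$, the resulting bound is uniform across the whole family. Finally, the $\O(\epsilon)$ slack is handled by noting that a bound $\rho_\alpha(\Gamma) \leq 1 + K\alpha + L\epsilon$ replaces $\Gamma$ by a matrix of pseudospectral radius $1 + \O(\epsilon)$, whose powers over $m \leq T/\epsilon$ steps grow at most like $(1 + L\epsilon)^{m} \leq e^{LT}$, which preserves the uniform bound and hence $L^2$-stability.

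The main obstacle is precisely this sharp estimate in the converse direction: passing from the elementary contour bound $\O(m)$ to the dimension-controlled bound $\O(n)$. The delicate ingredients are the optimal contour radius $r = 1 + 1/m$, since a cruder radius forfeits the sharp constant, and Spijker's arc-length lemma that replaces the step count $m$ by the fixed dimension $n$; everything else, including the conversion of the finite-time $\O(\epsilon)$ allowance into the exponential factor $e^{LT}$ and the appeal to Parseval, is routine.
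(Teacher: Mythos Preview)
The paper does not prove this statement at all: Theorem~\ref{th: Kreiss} is explicitly introduced as a restatement of Theorem~4.11 from Trefethen~\cite{Trefethen1996}, adapted to the paper's notation, and is invoked as a black box in the proof of Theorem~\ref{th: l2stability of LBM schemes}. There is therefore no ``paper's own proof'' to compare against.

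That said, your sketch is essentially the standard modern proof of the Kreiss Matrix Theorem. The forward direction via the Neumann series is routine and correct. For the converse, the key ingredients you identify --- the Cauchy integral representation on the circle $|z| = 1 + 1/m$, the observation that the naive contour estimate yields only $\mathcal{O}(m)$, and the use of Spijker's lemma to upgrade this to $e\,n\,K$ with $n$ the matrix dimension --- are exactly the components of the LeVeque--Trefethen/Spijker proof that gives the sharp linear-in-$n$ constant. Your handling of the $\mathcal{O}(\epsilon)$ slack through the exponential factor $(1+L\epsilon)^{T/\epsilon} \leq e^{LT}$ is also the standard argument. One small point worth tightening: the equivalence you assert between the pseudospectral bound $\rho_\alpha(\Gamma) \leq 1 + K\alpha$ and the Kreiss resolvent condition $\|(zI-\Gamma)^{-1}\| \leq K/(|z|-1)$ deserves a line of justification, since it relies on the characterization of the $\alpha$-pseudospectrum via the resolvent norm. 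Otherwise the outline is sound and matches what Trefethen presents.
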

Here $\rho_\alpha( B )$ denotes the $\alpha$-pseudospectral radius of a matrix $B$ which is defined as follows:
$$
\rho_\alpha( B ) = \sup_{||A||_2 \leq \alpha} \rho( B + A )
$$
and $\rho(C)$ denotes the spectral radius of the matrix $C$.
\\
Using Theorem \ref{th: Kreiss} we can now prove Theorem \ref{th: l2stability of LBM schemes}.

\begin{proof}[Proof of Theorem \ref{th: l2stability of LBM schemes}]
As stated in Theorem \ref{th: Kreiss} the analysis has to hold for all $\vec{k} \in \left[-\frac{\pi}{\epsilon},\frac{\pi}{\epsilon}\right]^D$.
However, since $\vec{k}$ occurs in $\Gamma(\vec{k},\epsilon)$ only within products $k_i \epsilon \ (i=1..D)$ it is sufficient to do the analysis for $\vec{k} \epsilon \in [-\pi, \pi]^D$.
As result of condition \ref{cond: lemma regular} we can diagonalize $\Gamma(\vec{k},\epsilon)$ as $\Gamma(\vec{k},\epsilon) = (V(\vec{k},\epsilon))^{-1} \Lambda( \vec{k}, \epsilon) V(\vec{k},\epsilon)$.
Here $\Lambda(\vec{k},\epsilon)$ is a diagonal matrix with $\Lambda(\vec{k},\epsilon)_{ii}$ being the $i$-th eigenvalue $\lambda_i$ of $\Gamma(\vec{k},\epsilon)$ and $V(\vec{k},\epsilon)$ being the eigenvector matrix of $\Gamma(\vec{k},\epsilon)$, i.e. column $i$ of $V(\vec{k},\epsilon)$ is the eigenvector of $\Gamma(\vec{k},\epsilon)$ corresponding to the eigenvalue $\lambda_i$.
Due to the diagonalizability of $\Gamma(\vec{k},\epsilon)$ we can use the theorem of Bauer-Fike \cite{bauer1960} to estimate the $\alpha$-pseudospectral radius of $\Gamma(\vec{k},\epsilon)$ as follows:
\begin{equation}
\label{eq: estimate Bauer-Fike}
\begin{aligned}
\rho_\alpha(\Gamma(\vec{k},\epsilon)) &= \sup_{||A||_2 \leq\alpha} \rho( \Gamma(\vec{k},\epsilon) + A )\\
&\leq \rho(\Gamma(\vec{k},\epsilon) ) + \sup_{||A||_2 \leq\alpha} \kappa( V( \vec{k}, \epsilon ) ) ||A||_2\\
&\leq \rho(\Gamma(\vec{k},\epsilon) ) + \kappa( V( \vec{k}, \epsilon ) ) \alpha
\end{aligned}
\end{equation}
Finally, with assumptions \ref{cond: lemma spectral radius}. and \ref{cond: lemma condition number}. for all $\vec{k} \epsilon \in [-\pi,\pi]^D$ we find:
\begin{align*}
\rho_\alpha(\Gamma(\vec{k},\epsilon)) &\leq 1 + \mathcal{O}(\alpha)\\
&\leq 1 + \mathcal{O}(\alpha) + \mathcal{O}(\epsilon)
\end{align*}
Therefore, Theorem \ref{th: Kreiss} proves $L^2$-stability of such LBM schemes.
\end{proof}

For schemes for which the matrix $\Gamma(\vec{k},\epsilon)$ is normal or unitary the following corollaries can be derived from Theorem \ref{th: l2stability of LBM schemes}:
\begin{corollary}[$L^2$-stability of LBM schemes where $\Gamma(\vec{k},\epsilon)$ is normal]
If the matrix $\Gamma(\vec{k},\epsilon)$ is normal for all $\vec{k}\epsilon \in [-\pi,\pi]^D$ and fulfills conditions \ref{cond: lemma regular}. and \ref{cond: lemma spectral radius}. of Theorem \ref{th: l2stability of LBM schemes} then the LBM scheme is $L^2$-stable.
\end{corollary}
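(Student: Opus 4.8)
The plan is to show that normality of $\Gamma(\vec{k},\epsilon)$ forces condition \ref{cond: lemma condition number} of Theorem \ref{th: l2stability of LBM schemes} to hold automatically with the uniform constant $C=1$, so that the corollary reduces to a direct application of Theorem \ref{th: l2stability of LBM schemes}. The central fact I would invoke is the spectral theorem for normal matrices: a normal matrix is unitarily diagonalizable, i.e.\ there exists a \emph{unitary} eigenvector matrix $V(\vec{k},\epsilon)$ whose columns form an orthonormal basis of eigenvectors and which satisfies $\Gamma(\vec{k},\epsilon) = (V(\vec{k},\epsilon))^{-1} \Lambda(\vec{k},\epsilon) V(\vec{k},\epsilon)$ with $(V(\vec{k},\epsilon))^{-1} = (V(\vec{k},\epsilon))^*$. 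This matches exactly the diagonalization used in the proof of Theorem \ref{th: l2stability of LBM schemes}. The existence of this unitary diagonalization follows from normality alone and holds for every $\vec{k}\epsilon \in [-\pi,\pi]^D$; condition \ref{cond: lemma regular} is retained merely because it is a hypothesis of Theorem \ref{th: l2stability of LBM schemes}.

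Next I would evaluate the $L^2$-condition number of this unitary eigenvector matrix. Since $V(\vec{k},\epsilon)$ is unitary, $\|V(\vec{k},\epsilon)\|_2 = 1$ and $\|(V(\vec{k},\epsilon))^{-1}\|_2 = \|(V(\vec{k},\epsilon))^*\|_2 = 1$, so that $\kappa(V(\vec{k},\epsilon)) = \|V(\vec{k},\epsilon)\|_2 \, \|(V(\vec{k},\epsilon))^{-1}\|_2 = 1$. Crucially, this value is independent of $\vec{k}$ and $\epsilon$, hence it holds uniformly over $\vec{k}\epsilon \in [-\pi,\pi]^D$, and condition \ref{cond: lemma condition number} is satisfied with $C=1$. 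With conditions \ref{cond: lemma regular} and \ref{cond: lemma spectral radius} assumed by hypothesis and condition \ref{cond: lemma condition number} now established, all three premises of Theorem \ref{th: l2stability of LBM schemes} are in force, and the $L^2$-stability of the scheme follows immediately from that theorem.

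The one point that requires care — and the only place where I expect the argument could go wrong — is the choice of eigenvector matrix: $\kappa(V)$ depends on which basis of eigenvectors is selected, and for eigenvalues of higher multiplicity an arbitrary eigenbasis need not be orthonormal, so $\kappa(V)$ could then exceed $1$. The argument works precisely because normality permits the choice of the orthonormal (unitary) eigenbasis furnished by the spectral theorem. I would therefore state explicitly that $V(\vec{k},\epsilon)$ is taken to be this unitary eigenvector matrix rather than an arbitrary one, which is what makes the uniform bound $\kappa(V(\vec{k},\epsilon)) = 1$ legitimate.
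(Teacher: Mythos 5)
Your proof is correct and takes essentially the same route as the paper's: normality gives unitary diagonalizability, so the eigenvector matrix $V(\vec{k},\epsilon)$ can be taken unitary, hence $\kappa(V(\vec{k},\epsilon))=1$ uniformly, condition \ref{cond: lemma condition number} of Theorem \ref{th: l2stability of LBM schemes} holds with $C=1$, and the theorem applies. Your explicit caveat about selecting the orthonormal eigenbasis (rather than an arbitrary one, which for repeated eigenvalues need not be orthonormal) is a careful point the paper leaves implicit.
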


\begin{proof}
Normal matrices are unitarily diagonalizable.
Due to this, we know that the eigenvector matrix $V(\vec{k},\epsilon)$ of $\Gamma(\vec{k},\epsilon)$ is unitary for all $\vec{k} \epsilon \in [-\pi,\pi]$.
Therefore, the condition number $\kappa( \Gamma(\vec{k},\epsilon)) = 1$ for all $\vec{k} \epsilon \in [-\pi,\pi]$.
Together with conditions \ref{cond: lemma regular}. and \ref{cond: lemma spectral radius}. of Theorem \ref{th: l2stability of LBM schemes} this shows $L^2$-stability of the LBM scheme.
\end{proof}

\begin{corollary}[$L^2$-stability of LBM schemes where $\Gamma(\vec{k},\epsilon)$ is unitary]
If the matrix $\Gamma(\vec{k},\epsilon)$ is unitary for all $\vec{k} \epsilon \in [-\pi,\pi]^D$  then the LBM scheme is $L^2$-stable.
\end{corollary}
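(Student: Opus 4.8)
The plan is to reduce this statement to the preceding Corollary for normal matrices, exploiting the fact that every unitary matrix is in particular normal. First I would recall that $\Gamma(\vec{k},\epsilon)$ being unitary means $\Gamma(\vec{k},\epsilon)^* \Gamma(\vec{k},\epsilon) = \Gamma(\vec{k},\epsilon) \Gamma(\vec{k},\epsilon)^* = I$, which immediately yields normality. Hence it only remains to verify that conditions \ref{cond: lemma regular} and \ref{cond: lemma spectral radius} of Theorem \ref{th: l2stability of LBM schemes} are satisfied for a unitary $\Gamma(\vec{k},\epsilon)$, after which the normal-matrix Corollary applies verbatim.

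For condition \ref{cond: lemma regular}, regularity is immediate from $|\det \Gamma(\vec{k},\epsilon)| = 1$, so $\Gamma(\vec{k},\epsilon)$ is invertible for every $\vec{k}\epsilon \in [-\pi,\pi]^D$. For condition \ref{cond: lemma spectral radius}, I would use that a unitary matrix preserves the Euclidean norm: if $\Gamma(\vec{k},\epsilon) \vec{w} = \lambda \vec{w}$ for some nonzero $\vec{w}$, then $\| \vec{w} \|_2 = \| \Gamma(\vec{k},\epsilon) \vec{w} \|_2 = |\lambda|\, \| \vec{w} \|_2$, forcing $|\lambda| = 1$. Thus every eigenvalue lies on the unit circle and $\rho(\Gamma(\vec{k},\epsilon)) = 1 \leq 1$ for all $\vec{k}\epsilon \in [-\pi,\pi]^D$.

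With normality established and conditions \ref{cond: lemma regular} and \ref{cond: lemma spectral radius} confirmed, the hypotheses of the Corollary for normal matrices are met and $L^2$-stability follows directly. As an equivalent route one could appeal to Theorem \ref{th: l2stability of LBM schemes} itself: by the spectral theorem a normal matrix admits a \emph{unitary} eigenvector matrix $V(\vec{k},\epsilon)$, whence $\kappa(V(\vec{k},\epsilon)) = 1$ uniformly, so condition \ref{cond: lemma condition number} holds with $C = 1$ as well.

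I do not expect any genuine obstacle here, as the entire argument rests on elementary properties of unitary matrices. The only point meriting a moment of care is that the spectral-radius bound is attained with equality rather than strict inequality; but since Theorem \ref{th: l2stability of LBM schemes} requires only $\rho(\Gamma(\vec{k},\epsilon)) \leq 1$, the value $\rho = 1$ is admissible and causes no difficulty.
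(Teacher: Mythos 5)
Your proposal is correct and follows essentially the same route as the paper: unitary implies normal, regular, and spectral radius $1$, after which the corollary for normal matrices applies. The only difference is that you spell out the elementary verifications (the determinant and norm-preservation arguments) that the paper simply asserts.
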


\begin{proof}
We know that unitary matrices are normal, regular, and have spectral radius $1$.
Therefore,  the LBM scheme is $L^2$-stable.
\end{proof}

\section{Examples of Velocity Sets}
\label{sec: examples of velocity sets}

In this section, different finite discrete velocity sets for which the LBM is converging are analyzed.
Here, it is important to remember that due to the dedimensionalization it is not necessary to provide discrete velocity sets for every choice of dimensional background stream parameters $\ub{\rho_0}$ and $\ub{\theta_0}$.

\subsection{D1Q3}
\label{sec: D1Q3}

For the one dimensional case we choose $\rho_0=1$ and $\theta_0=\frac{1}{3}$.
For this choice we find the compact velocity set $\mathcal{S} = \left\{ \vec{c}_i: i=1...3 \right\}$ we call D1Q3 with:
$$
\vec{c}_1 = 0,\ \vec{c}_2 = -1,\ \vec{c}_3 = 1
$$
and
$$
f^*(\vec{c}_i) = \left\{
\begin{array}{cl}
\frac{2}{3} & \hbox{for } i=1,\\
\frac{1}{6} & \hbox{for } i = 2, 3.
\end{array}
\right.
$$
The matrix $\Gamma(\vec{k},\epsilon)$ for D1Q3 is given as:
$$
\Gamma(\vec{k},\epsilon) = \begin{pmatrix}
1 & 0 & 0\\
0 & e^{i \vec{k} \epsilon} & 0\\
0 & 0 & e^{-i \vec{k} \epsilon}
\end{pmatrix}.
$$
One can easily see that $\Gamma(\vec{k},\epsilon)$ is unitary for all $\vec{k} \epsilon \in [ -\pi,\pi]$.
Therefore, the LBM scheme with the D1Q3 velocity set is $L^2$-stable.

\subsection{2D Velocity Sets}

\subsubsection{D2Q5 for Monoatomic Gases}

For the two dimensional case we choose $\rho_0 = 1$ and $\theta_0 = \frac{1}{4}$.
For this choice we obtain the compact velocity set $\mathcal{S} = \left\{ \vec{c}_i: i=1...5 \right\}$ with:
$$
\vec{c}_1 = \vector{0\\0}, \vec{c}_2 = \vector{-1\\0}, \vec{c}_3 = \vector{1\\0}, \vec{c}_4 = \vector{0\\-1}, \vec{c}_5 = \vector{0\\1},
$$
and
$$
f^*(\vec{c}_i) = \left\{
\begin{array}{cl}
\frac{1}{2} & \hbox{for } i=1,\\
\frac{1}{8} & \hbox{for } i=2...5.
\end{array}
\right.
$$
One can easily check that $\Gamma( \vec{k}, \epsilon)$ is unitary for all choices of $\vec{k}$ and $\epsilon$.
Therefore, the LBM scheme using D2Q5 is stable for monoatomic gases.

\subsubsection{D2Q5 for Diatomic Gases}

For the setting of diatomic gases with background density $\rho_0 = \frac{20}{3}$ and background temperature $\theta_0 = \frac{3}{10}$, we find the D2Q5 velocity set with:
\begin{align*}
f_i &= \left\{
\begin{array}{cl}
\frac{8}{3} &\hbox{ for } i = 0\\
1 & \hbox{ for } i = 1..4
\end{array}
\right., &
\beta_i &= \left\{
\begin{array}{cl}
0 &\hbox{ for } i = 0\\
\frac{1}{2} & \hbox{ for } i = 1..4
\end{array}
\right.
\end{align*}
\begin{align*}
a_1 &= \frac{1}{\rho_0}, & a_2 &= -5, & b &= \frac{1}{\theta_0}, & c_1 &= 0, & c_2 &= \frac{5}{\theta_0}
\end{align*} 
With this velocity set, the matrix $\Gamma(\tau = \frac{1}{2})$ is unitary.
Therefore, the LBM scheme using D2Q5 is stable for diatomic gases.

\subsection{3D}

For the 3D case we analyze a family of velocity sets.
We start with the large velocity set $\bar{\mathcal{S}} = \{ \vec{c}_i: i=1..27 \}$ where we have:
$$
c_i = \left\{
\begin{array}{ll}
(0,0,0) & i=1\\
(\pm 1,0,0),(0,\pm 1,0),(0,0,\pm 1) & i=2..7\\
(\pm 1, \pm 1, 0), (\pm 1, 0, \pm 1), (0,\pm 1, \pm 1) & i=8..19\\
(\pm 1, \pm 1, \pm 1) & i=20..27
\end{array}\right. .
$$

\subsubsection{Monoatomic Gases}

For monoatomic gases $f^*$ is of the following form:
$$
f^* = \left\{
\begin{array}{ll}
\frac{1}{2} \rho_0 \theta_0 ( 15\theta_0 - 9 ) + \rho_0 - 8 \alpha & \hbox{ for } \vec{c}_1\\
\frac{1}{2} \rho_0 \theta_0 ( 2 - 5 \theta_0 ) + 4 \alpha & \hbox{ for } \vec{c}_i, i = 2..7\\
\frac{1}{8} \rho_0 \theta_0 ( 5 \theta_0 - 1 ) - 2 \alpha & \hbox{ for } \vec{c}_i, i = 8..19\\
\alpha & \hbox{ for } \vec{c}_i, i = 20..27
\end{array}
\right.
$$
In the following the 3D stencils shown in figure \ref{fig: 3D velocity sets} are presented.
\medskip
\\
For the choice $\rho_0 = 1$, $\theta_0 = \frac{1}{5}$, and $\alpha = 0$ the function $f^*$ is zero for velocities $\vec{c}_i: i=8..27$.
Therefore, the velocity set $\bar{\mathcal{S}}$ reduces to the extremely compact set $\mathcal{S} = \{ \vec{c}_i: i=1..7 \}$ which we call D3Q7.
For the D3Q7 velocity set we see that the matrix $\Gamma(\vec{k},\epsilon)$ is unitary for all $\vec{k} \in \mathbb{R}^3$ and $\epsilon \in \mathbb{R}$.
Therefore the LBM with the D3Q7 velocity set is stable.
\medskip
\\
For the choice $\rho_0 = 1$, $\theta_0 = \frac{3}{5}$, and $\alpha = \frac{3}{40}$ the function $f^*$ is zero for velocities $\vec{c}_i: i=2..19$.
Therefore, the velocity set $\bar{\mathcal{S}}$ reduces to the slightly larger set $\mathcal{S} = \{ \vec{c}_i: i\in\{1,20..27\} \}$ which we call D3Q9.
As for the D3Q7 velocity set the matrix $\Gamma(\vec{k},\epsilon)$ is unitary for all $\vec{k} \in \mathbb{R}^3$ and $\epsilon \in \mathbb{R}$.
Therefore the LBM with the D3Q9 velocity set is stable.
\medskip
\\
For the choice $\rho_0 = 1$, $\theta_0 = \frac{2}{5}$, and $\alpha = 0$ the function $f^*$ is zero for velocities $\vec{c}_i: i\in\{2..7, 20..27\}$.
Therefore, the velocity set $\bar{\mathcal{S}}$ reduces to the set $\mathcal{S} = \{ \vec{c}_i: i\in\{1, 8..19\} \}$ which we call D3Q13.
As for the D3Q7 velocity set the matrix $\Gamma(\vec{k},\epsilon)$ is unitary for all $\vec{k} \in \mathbb{R}^3$ and $\epsilon \in \mathbb{R}$.
Therefore the LBM with the D3Q13 velocity set is stable.
\medskip
\\
For the choice $\rho_0 = 1$, $\theta_0 = \frac{3}{10}$, and $\alpha = 0$ the function $f^*$ is zero for velocities $\vec{c}_i: i = 20..27$.
Therefore, the velocity set $\bar{\mathcal{S}}$ reduces to the set $\mathcal{S} = \{ \vec{c}_i: i = 1..19 \}$ which we call D3Q19.
For this velocity set our analytical stability criterion could be proven.
But, following Banda \etal \cite{banda2006}, one can find a stability structure for the collision operator generated by this velocity set.
One can easily verify that the matrix $H(\tau=1)$, which is simply the equilibrium function in matrix form, is a projection matrix.
Therefore, the eigenvalues of $H(\tau=\frac{1}{2})$ are $0$ and $-2$.
In addition, we find that with the positive definite diagonal matrix $A_0 = \diag(3,13 I_6, 52 I_12)$ the matrix $A_0 H(\tau=\frac{1}{2})$ is symmetric.
Here, $I_n$ denotes the unity matrix in $n$ dimension.
Thus, there exists an invertible matrix $P$ such that
\begin{align*}
A_0 &= P^T P \quad \text{ and } \quad A_0 H(\tau=\frac{1}{2}) = P^T \Lambda P,
\end{align*}
where we can assume that
\begin{align*}
\Lambda = -\diag(0,0,0,0,0,2 I_{14})
\end{align*}
Therefore, the D3Q19 velocity set admits a stability structure and is stable. 
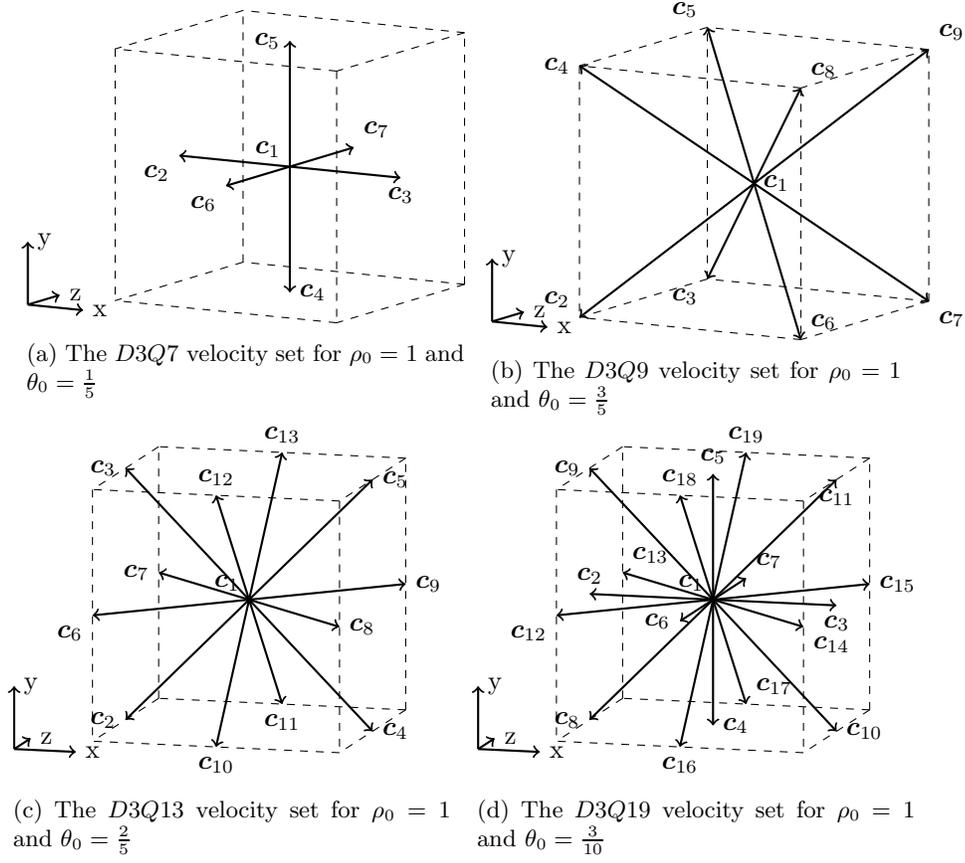
\begin{figure}[ht]
\begin{center}
\begin{subfigure}[t!]{0.48\textwidth}
\tdplotsetmaincoords{80}{120}
\begin{tikzpicture}[tdplot_main_coords, scale=.85]
\draw[thick,->] (0,0,0) -- (-2,0,0) node[anchor=south west]{$\vec{c}_7$};
\draw[thick,->] (0,0,0) -- (2,0,0) node[anchor=north east]{$\vec{c}_6$};
\draw[thick,->] (0,0,0) -- (0,-2,0) node[anchor=north east]{$\vec{c}_2$};
\draw[thick,->] (0,0,0) -- (0,2,0) node[anchor=north]{$\vec{c}_3$};
\draw[thick,->] (0,0,0) -- (0,0,-2) node[anchor=west]{$\vec{c}_4$};
\draw[thick,->] (0,0,0) -- (0,0,2) node[anchor=east]{$\vec{c}_5$};
\draw[] (0,0,0) -- (0,0,0) node[anchor=south east]{$\vec{c}_1$};
\draw[dashed] (-2,-2,-2) -- (2, -2, -2 );
\draw[dashed] (-2,-2,-2) -- (-2, -2, 2 );
\draw[dashed] (-2,-2,2) -- (2, -2, 2 );
\draw[dashed] (2,-2,2) -- (2, -2, -2 );
\draw[dashed] (-2,2,-2) -- (2, 2, -2 );
\draw[dashed] (-2,2,-2) -- (-2,2, 2 );
\draw[dashed] (-2,2,2) -- (2,2, 2 );
\draw[dashed] (2,2,2) -- (2, 2, -2 );
\draw[dashed] (2,2,2) -- (2, -2, 2 );
\draw[dashed] (2,2,-2) -- (2, -2, -2 );
\draw[dashed] (-2,2,2) -- (-2, -2, 2 );
\draw[dashed] (-2,2,-2) -- (-2, -2, -2 );
\draw[thick,->] (3,-3,-2) -- (2,-3,-2) node [anchor=west]{z};
\draw[thick,->] (3,-3,-2) -- (3,-3,-1) node [anchor=west]{y};
\draw[thick,->] (3,-3,-2) -- (3,-2,-2) node [anchor=west]{x};
\end{tikzpicture}
\caption{The $D3Q7$ velocity set for $\rho_0 = 1$ and $\theta_0 = \frac{1}{5}$}
\label{fig: D3Q7}
\end{subfigure}
~
\begin{subfigure}[t!]{0.45\textwidth}
\tdplotsetmaincoords{80}{120}
\begin{tikzpicture}[tdplot_main_coords, scale=.85]
\draw[thick,->] (0,0,0) -- (-2,-2,-2) node[anchor=north east]{$\vec{c}_3$};
\draw[thick,->] (0,0,0) -- (-2,-2,2) node[anchor=south east]{$\vec{c}_5$};
\draw[thick,->] (0,0,0) -- (-2,2,-2) node[anchor=north west]{$\vec{c}_7$};
\draw[thick,->] (0,0,0) -- (-2,2,2) node[anchor=south west]{$\vec{c}_9$};
\draw[thick,->] (0,0,0) -- (2,-2,-2) node[anchor=south east]{$\vec{c}_2$};
\draw[thick,->] (0,0,0) -- (2,-2,2) node[anchor=east]{$\vec{c}_4$};
\draw[thick,->] (0,0,0) -- (2,2,-2) node[anchor=south west]{$\vec{c}_6$};
\draw[thick,->] (0,0,0) -- (2,2,2) node[anchor=south west]{$\vec{c}_8$};
\draw[] (0,0,0) -- (0,0,0) node[anchor=west]{$\vec{c}_1$};
\draw[dashed] (-2,-2,-2) -- (2, -2, -2 );
\draw[dashed] (-2,-2,-2) -- (-2, -2, 2 );
\draw[dashed] (-2,-2,2) -- (2, -2, 2 );
\draw[dashed] (2,-2,2) -- (2, -2, -2 );
\draw[dashed] (-2,2,-2) -- (2, 2, -2 );
\draw[dashed] (-2,2,-2) -- (-2,2, 2 );
\draw[dashed] (-2,2,2) -- (2,2, 2 );
\draw[dashed] (2,2,2) -- (2, 2, -2 );
\draw[dashed] (2,2,2) -- (2, -2, 2 );
\draw[dashed] (2,2,-2) -- (2, -2, -2 );
\draw[dashed] (-2,2,2) -- (-2, -2, 2 );
\draw[dashed] (-2,2,-2) -- (-2, -2, -2 );
\draw[thick,->] (3,-3,-2) -- (2,-3,-2) node [anchor=west]{z};
\draw[thick,->] (3,-3,-2) -- (3,-3,-1) node [anchor=west]{y};
\draw[thick,->] (3,-3,-2) -- (3,-2,-2) node [anchor=west]{x};
\end{tikzpicture}
\caption{The $D3Q9$ velocity set for $\rho_0 = 1$ and $\theta_0 = \frac{3}{5}$}
\end{subfigure}
\\
\begin{subfigure}[t!]{0.48\textwidth}
\tdplotsetmaincoords{80}{105}
\begin{tikzpicture}[tdplot_main_coords, scale=.85]
\draw[thick,->] (0,0,0) -- (-2,-2,0) node[anchor=east]{$\vec{c}_{7}$};
\draw[thick,->] (0,0,0) -- (-2,2,0) node[anchor=west]{$\vec{c}_{9}$};
\draw[thick,->] (0,0,0) -- (2,-2,0) node[anchor=north east]{$\vec{c}_{6}$};
\draw[thick,->] (0,0,0) -- (2,2,0) node[anchor=west]{$\vec{c}_{8}$};
\draw[thick,->] (0,0,0) -- (-2,0,-2) node[anchor=north]{$\vec{c}_{11}$};
\draw[thick,->] (0,0,0) -- (-2,0,2) node[anchor=south]{$\vec{c}_{13}$};
\draw[thick,->] (0,0,0) -- (2,0,-2) node[anchor=north]{$\vec{c}_{10}$};
\draw[thick,->] (0,0,0) -- (2,0,2) node[anchor=south]{$\vec{c}_{12}$};
\draw[thick,->] (0,0,0) -- (0,-2,-2) node[anchor=east]{$\vec{c}_{2}$};
\draw[thick,->] (0,0,0) -- (0,-2,2) node[anchor=east]{$\vec{c}_{3}$};
\draw[thick,->] (0,0,0) -- (0,2,-2) node[anchor=west]{$\vec{c}_{4}$};
\draw[thick,->] (0,0,0) -- (0,2,2) node[anchor=west]{$\vec{c}_{5}$};
\draw[] (0,0,0) -- (0,0,0) node[anchor=south east]{$\vec{c}_1$};
\draw[dashed] (-2,-2,-2) -- (2, -2, -2 );
\draw[dashed] (-2,-2,-2) -- (-2, -2, 2 );
\draw[dashed] (-2,-2,2) -- (2, -2, 2 );
\draw[dashed] (2,-2,2) -- (2, -2, -2 );
\draw[dashed] (-2,2,-2) -- (2, 2, -2 );
\draw[dashed] (-2,2,-2) -- (-2,2, 2 );
\draw[dashed] (-2,2,2) -- (2,2, 2 );
\draw[dashed] (2,2,2) -- (2, 2, -2 );
\draw[dashed] (2,2,2) -- (2, -2, 2 );
\draw[dashed] (2,2,-2) -- (2, -2, -2 );
\draw[dashed] (-2,2,2) -- (-2, -2, 2 );
\draw[dashed] (-2,2,-2) -- (-2, -2, -2 );
\draw[thick,->] (3,-3,-2) -- (2,-3,-2) node [anchor=west]{z};
\draw[thick,->] (3,-3,-2) -- (3,-3,-1) node [anchor=west]{y};
\draw[thick,->] (3,-3,-2) -- (3,-2,-2) node [anchor=west]{x};
\end{tikzpicture}
\caption{The $D3Q13$ velocity set for $\rho_0 = 1$ and $\theta_0 = \frac{2}{5}$}
\label{fig: D3Q13}
\end{subfigure}
~
\begin{subfigure}[t!]{0.48\textwidth}
\tdplotsetmaincoords{80}{105}
\begin{tikzpicture}[tdplot_main_coords, scale=.85]
\draw[thick,->] (0,0,0) -- (-2,0,0) node[anchor=south west]{$\vec{c}_7$};
\draw[thick,->] (0,0,0) -- (2,0,0) node[anchor=east]{$\vec{c}_6$};
\draw[thick,->] (0,0,0) -- (0,-2,0) node[anchor=south]{$\vec{c}_2$};
\draw[thick,->] (0,0,0) -- (0,2,0) node[anchor=north]{$\vec{c}_3$};
\draw[thick,->] (0,0,0) -- (0,0,-2) node[anchor=west]{$\vec{c}_4$};
\draw[thick,->] (0,0,0) -- (0,0,2) node[anchor=south]{$\vec{c}_5$};
\draw[] (0,0,0) -- (0,0,0) node[anchor=south east]{$\vec{c}_1$};
\draw[thick,->] (0,0,0) -- (-2,-2,0) node[anchor=south west]{$\vec{c}_{13}$};
\draw[thick,->] (0,0,0) -- (-2,2,0) node[anchor=west]{$\vec{c}_{15}$};
\draw[thick,->] (0,0,0) -- (2,-2,0) node[anchor=north east]{$\vec{c}_{12}$};
\draw[thick,->] (0,0,0) -- (2,2,0) node[anchor=north west]{$\vec{c}_{14}$};
\draw[thick,->] (0,0,0) -- (-2,0,-2) node[anchor=south west]{$\vec{c}_{17}$};
\draw[thick,->] (0,0,0) -- (-2,0,2) node[anchor=south]{$\vec{c}_{19}$};
\draw[thick,->] (0,0,0) -- (2,0,-2) node[anchor=north]{$\vec{c}_{16}$};
\draw[thick,->] (0,0,0) -- (2,0,2) node[anchor=south]{$\vec{c}_{18}$};
\draw[thick,->] (0,0,0) -- (0,-2,-2) node[anchor=east]{$\vec{c}_{8}$};
\draw[thick,->] (0,0,0) -- (0,-2,2) node[anchor=east]{$\vec{c}_{9}$};
\draw[thick,->] (0,0,0) -- (0,2,-2) node[anchor=west]{$\vec{c}_{10}$};
\draw[thick,->] (0,0,0) -- (0,2,2) node[anchor=north]{$\vec{c}_{11}$};
\draw[dashed] (-2,-2,-2) -- (2, -2, -2 );
\draw[dashed] (-2,-2,-2) -- (-2, -2, 2 );
\draw[dashed] (-2,-2,2) -- (2, -2, 2 );
\draw[dashed] (2,-2,2) -- (2, -2, -2 );
\draw[dashed] (-2,2,-2) -- (2, 2, -2 );
\draw[dashed] (-2,2,-2) -- (-2,2, 2 );
\draw[dashed] (-2,2,2) -- (2,2, 2 );
\draw[dashed] (2,2,2) -- (2, 2, -2 );
\draw[dashed] (2,2,2) -- (2, -2, 2 );
\draw[dashed] (2,2,-2) -- (2, -2, -2 );
\draw[dashed] (-2,2,2) -- (-2, -2, 2 );
\draw[dashed] (-2,2,-2) -- (-2, -2, -2 );
\draw[thick,->] (3,-3,-2) -- (2,-3,-2) node [anchor=west]{z};
\draw[thick,->] (3,-3,-2) -- (3,-3,-1) node [anchor=west]{y};
\draw[thick,->] (3,-3,-2) -- (3,-2,-2) node [anchor=west]{x};
\end{tikzpicture}
\caption{The $D3Q19$ velocity set for $\rho_0 = 1$ and $\theta_0 = \frac{3}{10}$}
\label{fig: D3Q19}
\end{subfigure}
\end{center}
\caption{Different velocity sets in three dimensions.}
\label{fig: 3D velocity sets}
\end{figure}

\subsubsection{Diatomic Gases}

For diatomic gases we find the following family of D3Q7 velocity sets:
\begin{align*}
\rho_0 &= \frac{42}{5} f_1, & \theta_0 &= \frac{5}{21},
\end{align*}
\begin{align*}
a_1 &= \frac{1}{\rho_0}, & a_2 &= -\frac{21}{2}, & b &= \frac{21}{5}, & c_1 &= 0, & c_2 &= \frac{147}{5},
\end{align*}
\begin{align*}
f_i &= \left\{ \begin{array}{cl}
\frac{12}{5} f_1 & \hbox{ for } i = 0\\
f_1 &\hbox{ for } i = 1..6
\end{array}
\right.
&
\beta_i &= \left\{ \begin{array}{cl}
0 & \hbox{ for } i = 0\\
\frac{2}{3} &\hbox{ for } i = 1..6
\end{array}
\right.
\end{align*}
For the choice $f_1 = \frac{5}{42}$ we find $\Gamma(f_1=\frac{5}{42},\tau=\frac{1}{2})$ to be unitary.
Therefore, the LBM for diatomic gases with the D3Q7 velocity set is $L^2$-stable.

\section{Numerical Results}
\label{sec: Numerical Results}

In this section we present numerical results for different LBM schemes introduced in section \ref{sec: examples of velocity sets}.
Again, we would like to stress the freedom introduced by the dedimensionalization.
Due to this freedom, it is possible to simulate settings with different background flows using a single velocity set with its according background flow.
\medskip
\\
The test problem used in this section consists of the propagation of a Gauss pulse in the gas.
All results presented are for periodic domains and boundary and initial conditions.
For 1D we compare the results to the analytical solution.
For the mono- and diatomic cases in 2D and the diatomic case in 3D we compare the results to a reference solution obtained using the Finite Volume Method (FVM).
These FVM solutions were generated using the Clawpack \cite{clawpack} \cite{leveque1997} \cite{LangsethLeVeque00}, PyClaw \cite{pyclaw} \cite{pyclaw-sisc}, and PETSc \cite{petsc-web-page} \cite{petsc-user-ref} \cite{petsc-efficient} software packages with appropriate, self-written Riemann solvers.
Due to the enormous computational costs of obtaining the FVM solution in the 3D case and the little practical relevance of the monoatomic case, we perform a simple convergence analysis against a highly-resolved LBM solution for the 3D monoatomic case.
For the 1D case we measure the error using a discrete $L^2$-norm over the space-time domain $\mathbb{X} \times \mathbb{T}$.
For a scalar function $\eta$ defined at the grid points $\vec{p}_i$ and time steps $t_i$, this norm is defined as:
\begin{equation}
\label{eq: L2 space time norm}
|| \eta ||^2_{L^2(\mathbb{X} \times \mathbb{T})} = \sum_{i = 0}^{N_\mathbb{T}} \sum_{j = 1}^{N_\mathbb{X}} | \eta( t_i, \vec{p}_j ) |^2 \ \d t^{D+1}
\end{equation}
where $N_\mathbb{T}$, $N_\mathbb{X}$, and $\d t$ denote the number of time steps and grid points and the step size in both time and space respectively.
For the 2D and 3D cases we use the discrete $L^2$-norm at time $t=1$ over space only.
For a scalar function $\eta$ defined at the grid points $\vec{p}_i$ at time $t=T$, this norm is defined as:
\begin{equation}
\label{eq: L2 space norm}
|| \eta ||^2_{L^2(\mathbb{X}, t=T)} = \sum_{j = 1}^{N_\mathbb{X}} | \eta( t = 2, \vec{p}_j ) |^2 \ \d t^{D}
\end{equation}
where $N_\mathbb{X}$  and $\d t$ denote the number of grid points and the step size in both time and space respectively.
If not denoted otherwise, all results were computed non-dimensional.

\subsection{1D}

For the 1D case, we analyze the propagation of a Gauss pulse on the time-space domain $\mathbb{T} \times \mathbb{X} = [0;T] \times [0;1)$ for the D1Q3 velocity set with $\rho_0 = 0$ and $\theta_0 = \frac{1}{3}$.
The Gauss pulse is given by the following initial values:
\begin{subequations}
\label{ic: 1D gauss}
\begin{align}
\rho'(t=0, x ) &= \exp\left( - 100 \left( x - \frac{1}{2} \right)^2 \right),\\
u'_x(t=0, x ) &= 0,\\
\theta'(t=0, x ) &= 0.
\end{align}
\end{subequations}
Figure \ref{fig: L2-dens 1D gauss} shows the convergence behavior of the error in density for different end times $T$ measured in norm (\ref{eq: L2 space time norm}).
One can easily see that the errors are close to machine precision.
The reason for this is that the characteristic velocities of the 1D LEE with $\rho_0=1$ and $\theta_0=\frac{1}{3}$ coincide with the velocities within the D1Q3 velocity set.
Therefore, the D1Q3 LBM scheme can solve the 1D LEE with $\rho_0=1$ and $\theta_0=\frac{1}{3}$ analytically with respect to machine precision.

\begin{figure}[ht]
\begin{center}
\input{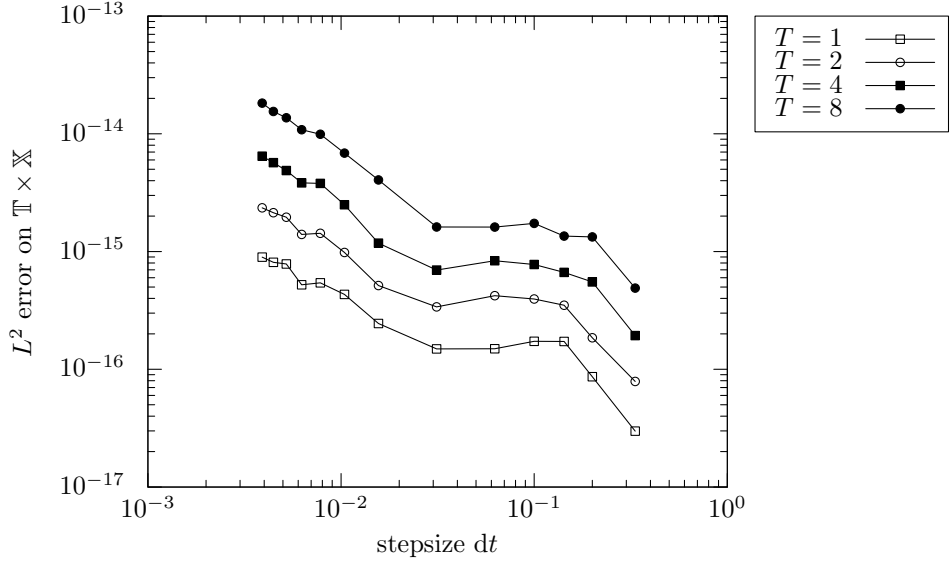}
\end{center}
\caption{$L^2$-error in the density on the time-space domain $\mathbb{T} \times \mathbb{X} = [0;T] \times [0;1)$ for the D1Q3 velocity set and initial conditions (\ref{ic: 1D gauss}).}
\label{fig: L2-dens 1D gauss}
\end{figure}

\subsection{2D}

We consider a 2D Gauss pulse in a monoatomic gas on the time-space domain $\mathbb{T} \times \mathbb{X}= [0;1] \times [0;2)^2$ for the D2Q5 velocity set with $\rho_0 = 1$ and $\theta_0 = \frac{1}{4}$ and initial conditions:
\begin{subequations}
\label{ic: 2D gauss}
\begin{align}
\rho'(t=0, x, y ) &= \exp\left( -7 \left|\left|(x,y)^T - (1,1)^T\right|\right|^2 \right), \\
u'_x(t=0, x, y ) &= 0,\\
u'_y( t=0, x, y ) &= 0,\\
\theta'(t=0, x, y ) &= 0.
\end{align}
\end{subequations}
Figure \ref{fig: L2-dens 2D gauss} presents the convergence behavior of the error in the macroscopic quantities, measured in the norm (\ref{eq: L2 space norm}).
Here, the FVM solution used as reference solution was computed using $5\dot 10^3 \times 5\cdot 10^3$ volumes.
One can easily verify a second-order convergence in this graph.
\begin{figure}[ht]
\begin{center}
\input{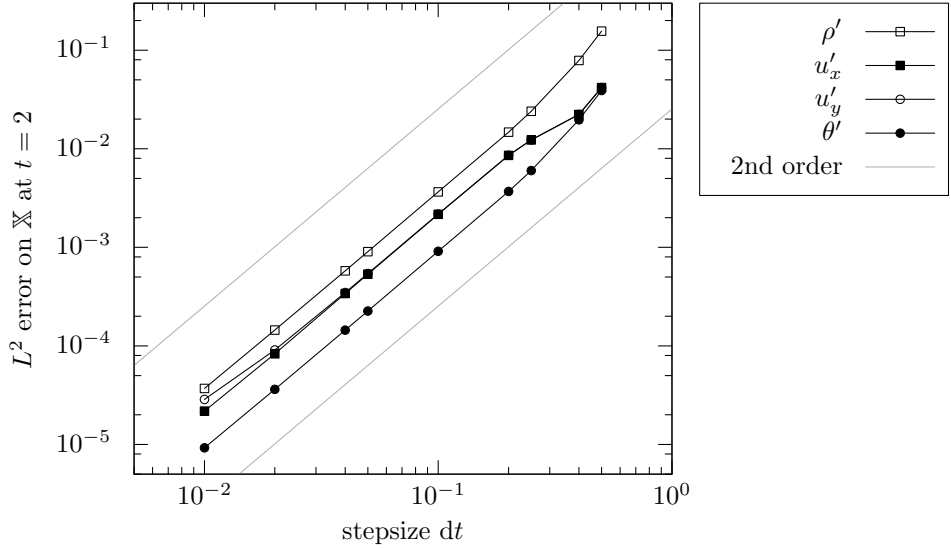}
\end{center}
\caption{$L^2$-error in the macroscopic variables on the spatial domain $\mathbb{X} = [0;2)^2$ at time $t = 2$ for the D2Q5 velocity set and initial conditions (\ref{ic: 2D gauss}).}
\label{fig: L2-dens 2D gauss}
\end{figure}
\medskip
\\
For a diatomic gas with background density $\rho_0 = \frac{20}{3}$ and background temperature $\theta_0 = \frac{3}{10}$ we simulate a Gauss pulse on the domain $\mathbb{T} \times \mathbb{X} = [0;1] \times [0;2)^2$ with initial conditions (\ref{ic: 2D gauss}).
Figure \ref{fig: L2-dens 2D gauss diatomic} shows the convergence behavior of the error in the macroscopic quantities measured in norm (\ref{eq: L2 space norm}).
Since the diatomic case is the more realistic case, the FVM solution used as reference solution was calculated using the higher resolution of $2\cdot 10^{4} \times 2\cdot 10^{4}$ volumes.
From figure \ref{fig: L2-dens 2D gauss diatomic} one can find second-order convergence.
\begin{figure}[ht]
\begin{center}
\input{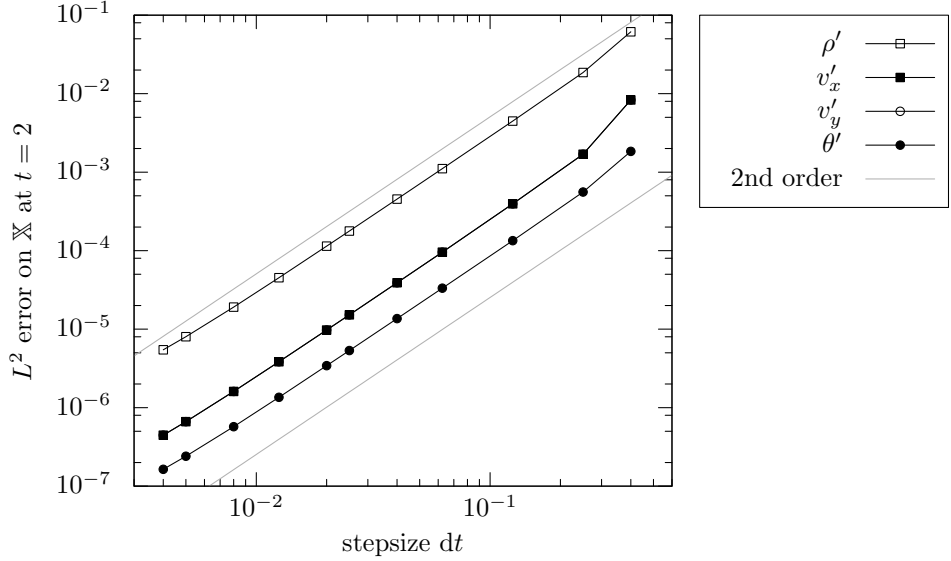}
\end{center}
\caption{$L^2$-error in the macroscopic density on the spatial domain $\mathbb{X} = [0;2)^2$ at time $t = 1$ for the diatomic D2Q5 velocity set and initial conditions (\ref{ic: 2D gauss}).}
\label{fig: L2-dens 2D gauss diatomic}
\end{figure}

\subsection{3D}

We calculate a 3D Gauss pulse for a monoatomic gas on the time-space domain $\mathbb{T} \times \mathbb{X}= [0;2] \times [0;2)^3$ for $\rho_0 = 1$ and $\theta_0 = \frac{1}{5}$ with initial conditions:
\begin{subequations}
\label{ic: 3D gauss}
\begin{align}
\rho'(t=0, x, y, z ) &= \exp\left( -15 \left|\left|(x,y,z)^T - (1,1,1)^T\right|\right|^2 \right), \\
u'_x(t=0, x, y, z ) &= 0,\\
u'_y( t=0, x, y, z ) &= 0,\\
u'_z( t=0, x, y, z ) &= 0,\\
\theta'(t=0, x, y, z ) &= 0.
\end{align}
\end{subequations}
Since the monoatomic case is of little interest in real-life situations, we analyze convergence of the method against a highly-resolved LBM solution in order to save compute resources.
From figure \ref{fig: L2-dens 3D gauss} we can see second-order convergence towards the numerical solution.
\begin{figure}[ht]
\begin{center}
\input{convergence_gauss_2D_L2_monoatomic}
\end{center}
\caption{$L^2$-error in the macroscopic variables for a monotaomic gas on the spatial domain $\mathbb{X} = [0;2)^3$ at time $t = 2$ for the D3Q7 velocity set and initial conditions (\ref{ic: 3D gauss}).}
\label{fig: L2-dens 3D gauss}
\end{figure}
\medskip
\\
For a diatomic gas, we first analyze the propagation of a Gauss pulse with dimensional background density $\ub{\rho}_0 = 1$ and dimensional background temperature $\ub{\theta}_0 = 1$ on the dimensional domain $\mathbb{\ub{T}} \times \mathbb{\ub{X}} = [0;1] \times [0;2)^3$.
The Gauss pulse is given by the dimensional initial condition:
\begin{subequations}
\label{ic: 3D diatomic}
\begin{align}
\ub{\rho}'(\ub{t}=0, \ub{x}, \ub{y}, \ub{z} ) &= \exp\left( -15 \left|\left|(\ub{x},\ub{y},\ub{z})^T - (1,1,1)^T\right|\right|^2 \right), \\
\ub{u'}_x(\ub{t}=0, \ub{x}, \ub{y}, \ub{z} ) &= 0,\\
\ub{u'}_y(\ub{t}=0, \ub{x}, \ub{y}, \ub{z} ) &= 0,\\
\ub{u'}_z(\ub{t}=0, \ub{x}, \ub{y}, \ub{z} ) &= 0,\\
\ub{\theta'}(\ub{t}=0, \ub{x}, \ub{y}, \ub{z} ) &= 0.
\end{align}
\end{subequations}
We use the diatomic D3Q7 velocity set for simulation.
The FVM solution was calculated using $1200 \times 1200 \times 1200$ volumes.
In order to compare the LBM and the FVM solutions, we need to use the non-dimensional end-time $T=\sqrt{\frac{21}{5}}$ for the LBM solver.
As we cannot find grid sizes for which the time step $\d t$ is a divisor of the dimensionless end-time $T=\sqrt{\frac{21}{5}}$, we need to take into account that we are comparing slightly different end-times for the FVM- and LBM-solutions.
The grid sizes were chosen in order to minimize this effect while still choosing the number of grid points in each direction $N$ such that it is a divisor of 1200.
Table \ref{tab: diff end-time minus LBM-end-time} presents the grid sizes used and the error between the dimensionless end-time $T=\sqrt{\frac{21}{5}}$ and the end-time of the according LBM simulation $N_{TS}\ \d t$ where $N_{TS} = \text{round}\left( \frac{T}{dt} \right)$.
From figure \ref{fig: L2-dens 3D gauss diatomic} we can see an second-order convergence for up to $N=80$.
The following decrease in convergence is due to the poor resolution of the FVM solution. 
\medskip
\\
In addition, we analyze convergence against a highly-resolved LBM solution for a Gauss pulse within a diatomic gas with initial conditions (\ref{ic: 3D gauss}) using the D3Q7 velocity.
In figure \ref{fig: L2-dens 3D gauss diatomic nondimensional} we see second-order convergence.

 \begin{table}[ht]
 \begin{center}
\begin{tabular}{|c|c|c|c|c|}
 \hline 
 $N$ & 25 & 40 & 80 & 120\\ 
 \hline 
 $\sqrt{\frac{21}{5}} - N_{TS}\ \d t$ & $-3.06\ 10^{-2}$ & $-6.10\ 10^{-4}$ & $-6.10\ 10^{-4}$ & $-6.10\ 10^{-4}$ \\ 
 \hline 
 \end{tabular}
 \end{center}
 \caption{Difference between the dimensionless end-time $T=\sqrt{\frac{21}{5}}$ and the end-time of the LBM simulations for different grid sizes in each direction $N$ with $N_{TS}$ time steps.}
 \label{tab: diff end-time minus LBM-end-time}
 \end{table}
 
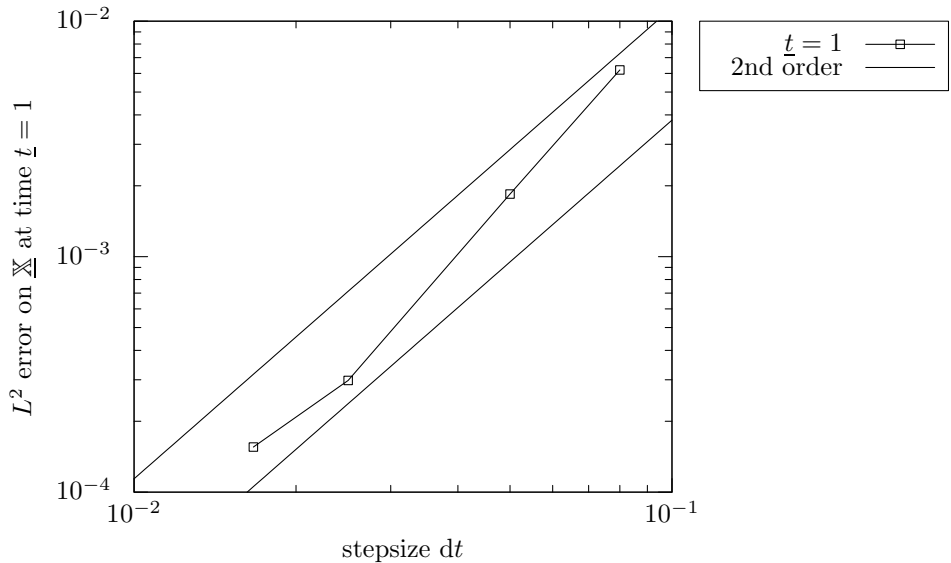
\begin{figure}[ht]
\begin{center}
\begin{tikzpicture}[gnuplot]
\gpsolidlines
\path (0.000,0.000) rectangle (12.700,7.620);
\gpcolor{color=gp lt color border}
\gpsetlinetype{gp lt border}
\gpsetlinewidth{1.00}
\draw[gp path] (1.688,0.985)--(1.868,0.985);
\draw[gp path] (8.839,0.985)--(8.659,0.985);
\node[gp node right] at (1.504,0.985) {$10^{-4}$};
\draw[gp path] (1.688,1.928)--(1.778,1.928);
\draw[gp path] (8.839,1.928)--(8.749,1.928);
\draw[gp path] (1.688,2.480)--(1.778,2.480);
\draw[gp path] (8.839,2.480)--(8.749,2.480);
\draw[gp path] (1.688,2.871)--(1.778,2.871);
\draw[gp path] (8.839,2.871)--(8.749,2.871);
\draw[gp path] (1.688,3.175)--(1.778,3.175);
\draw[gp path] (8.839,3.175)--(8.749,3.175);
\draw[gp path] (1.688,3.423)--(1.778,3.423);
\draw[gp path] (8.839,3.423)--(8.749,3.423);
\draw[gp path] (1.688,3.633)--(1.778,3.633);
\draw[gp path] (8.839,3.633)--(8.749,3.633);
\draw[gp path] (1.688,3.814)--(1.778,3.814);
\draw[gp path] (8.839,3.814)--(8.749,3.814);
\draw[gp path] (1.688,3.975)--(1.778,3.975);
\draw[gp path] (8.839,3.975)--(8.749,3.975);
\draw[gp path] (1.688,4.118)--(1.868,4.118);
\draw[gp path] (8.839,4.118)--(8.659,4.118);
\node[gp node right] at (1.504,4.118) {$10^{-3}$};
\draw[gp path] (1.688,5.061)--(1.778,5.061);
\draw[gp path] (8.839,5.061)--(8.749,5.061);
\draw[gp path] (1.688,5.613)--(1.778,5.613);
\draw[gp path] (8.839,5.613)--(8.749,5.613);
\draw[gp path] (1.688,6.004)--(1.778,6.004);
\draw[gp path] (8.839,6.004)--(8.749,6.004);
\draw[gp path] (1.688,6.308)--(1.778,6.308);
\draw[gp path] (8.839,6.308)--(8.749,6.308);
\draw[gp path] (1.688,6.556)--(1.778,6.556);
\draw[gp path] (8.839,6.556)--(8.749,6.556);
\draw[gp path] (1.688,6.766)--(1.778,6.766);
\draw[gp path] (8.839,6.766)--(8.749,6.766);
\draw[gp path] (1.688,6.947)--(1.778,6.947);
\draw[gp path] (8.839,6.947)--(8.749,6.947);
\draw[gp path] (1.688,7.108)--(1.778,7.108);
\draw[gp path] (8.839,7.108)--(8.749,7.108);
\draw[gp path] (1.688,7.251)--(1.868,7.251);
\draw[gp path] (8.839,7.251)--(8.659,7.251);
\node[gp node right] at (1.504,7.251) {$10^{-2}$};
\draw[gp path] (1.688,0.985)--(1.688,1.165);
\draw[gp path] (1.688,7.251)--(1.688,7.071);
\node[gp node center] at (1.688,0.677) {$10^{-2}$};
\draw[gp path] (3.841,0.985)--(3.841,1.075);
\draw[gp path] (3.841,7.251)--(3.841,7.161);
\draw[gp path] (5.100,0.985)--(5.100,1.075);
\draw[gp path] (5.100,7.251)--(5.100,7.161);
\draw[gp path] (5.993,0.985)--(5.993,1.075);
\draw[gp path] (5.993,7.251)--(5.993,7.161);
\draw[gp path] (6.686,0.985)--(6.686,1.075);
\draw[gp path] (6.686,7.251)--(6.686,7.161);
\draw[gp path] (7.253,0.985)--(7.253,1.075);
\draw[gp path] (7.253,7.251)--(7.253,7.161);
\draw[gp path] (7.731,0.985)--(7.731,1.075);
\draw[gp path] (7.731,7.251)--(7.731,7.161);
\draw[gp path] (8.146,0.985)--(8.146,1.075);
\draw[gp path] (8.146,7.251)--(8.146,7.161);
\draw[gp path] (8.512,0.985)--(8.512,1.075);
\draw[gp path] (8.512,7.251)--(8.512,7.161);
\draw[gp path] (8.839,0.985)--(8.839,1.165);
\draw[gp path] (8.839,7.251)--(8.839,7.071);
\node[gp node center] at (8.839,0.677) {$10^{-1}$};
\draw[gp path] (1.688,7.251)--(1.688,0.985)--(8.839,0.985)--(8.839,7.251)--cycle;
\node[gp node center,rotate=-270] at (0.246,4.118) {$L^2$ error on $\ub{\mathbb{X}}$ at time $\ub{t}=1$};
\node[gp node center] at (5.263,0.215) {stepsize $\d t$};
\draw[gp path] (9.207,6.327)--(9.207,7.251)--(12.515,7.251)--(12.515,6.327)--cycle;
\node[gp node right] at (11.231,6.943) {$\ub{t} = 1$};
\draw[gp path] (11.415,6.943)--(12.331,6.943);
\draw[gp path] (8.146,6.602)--(6.686,4.951)--(4.534,2.471)--(3.274,1.585);
\gpsetpointsize{4.00}
\gppoint{gp mark 4}{(8.146,6.602)}
\gppoint{gp mark 4}{(6.686,4.951)}
\gppoint{gp mark 4}{(4.534,2.471)}
\gppoint{gp mark 4}{(3.274,1.585)}
\gppoint{gp mark 4}{(11.873,6.943)}
\node[gp node right] at (11.231,6.635) {2nd order};
\draw[gp path] (11.415,6.635)--(12.331,6.635);
\draw[gp path] (1.688,1.163)--(1.760,1.226)--(1.832,1.290)--(1.905,1.353)--(1.977,1.416)%
  --(2.049,1.480)--(2.121,1.543)--(2.194,1.606)--(2.266,1.669)--(2.338,1.733)--(2.410,1.796)%
  --(2.483,1.859)--(2.555,1.923)--(2.627,1.986)--(2.699,2.049)--(2.771,2.113)--(2.844,2.176)%
  --(2.916,2.239)--(2.988,2.302)--(3.060,2.366)--(3.133,2.429)--(3.205,2.492)--(3.277,2.556)%
  --(3.349,2.619)--(3.422,2.682)--(3.494,2.745)--(3.566,2.809)--(3.638,2.872)--(3.711,2.935)%
  --(3.783,2.999)--(3.855,3.062)--(3.927,3.125)--(3.999,3.188)--(4.072,3.252)--(4.144,3.315)%
  --(4.216,3.378)--(4.288,3.442)--(4.361,3.505)--(4.433,3.568)--(4.505,3.632)--(4.577,3.695)%
  --(4.650,3.758)--(4.722,3.821)--(4.794,3.885)--(4.866,3.948)--(4.938,4.011)--(5.011,4.075)%
  --(5.083,4.138)--(5.155,4.201)--(5.227,4.264)--(5.300,4.328)--(5.372,4.391)--(5.444,4.454)%
  --(5.516,4.518)--(5.589,4.581)--(5.661,4.644)--(5.733,4.708)--(5.805,4.771)--(5.877,4.834)%
  --(5.950,4.897)--(6.022,4.961)--(6.094,5.024)--(6.166,5.087)--(6.239,5.151)--(6.311,5.214)%
  --(6.383,5.277)--(6.455,5.340)--(6.528,5.404)--(6.600,5.467)--(6.672,5.530)--(6.744,5.594)%
  --(6.816,5.657)--(6.889,5.720)--(6.961,5.784)--(7.033,5.847)--(7.105,5.910)--(7.178,5.973)%
  --(7.250,6.037)--(7.322,6.100)--(7.394,6.163)--(7.467,6.227)--(7.539,6.290)--(7.611,6.353)%
  --(7.683,6.416)--(7.756,6.480)--(7.828,6.543)--(7.900,6.606)--(7.972,6.670)--(8.044,6.733)%
  --(8.117,6.796)--(8.189,6.859)--(8.261,6.923)--(8.333,6.986)--(8.406,7.049)--(8.478,7.113)%
  --(8.550,7.176)--(8.622,7.239)--(8.636,7.251);
\draw[gp path] (3.191,0.985)--(3.205,0.997)--(3.277,1.061)--(3.349,1.124)--(3.422,1.187)%
  --(3.494,1.251)--(3.566,1.314)--(3.638,1.377)--(3.711,1.441)--(3.783,1.504)--(3.855,1.567)%
  --(3.927,1.630)--(3.999,1.694)--(4.072,1.757)--(4.144,1.820)--(4.216,1.884)--(4.288,1.947)%
  --(4.361,2.010)--(4.433,2.073)--(4.505,2.137)--(4.577,2.200)--(4.650,2.263)--(4.722,2.327)%
  --(4.794,2.390)--(4.866,2.453)--(4.938,2.516)--(5.011,2.580)--(5.083,2.643)--(5.155,2.706)%
  --(5.227,2.770)--(5.300,2.833)--(5.372,2.896)--(5.444,2.960)--(5.516,3.023)--(5.589,3.086)%
  --(5.661,3.149)--(5.733,3.213)--(5.805,3.276)--(5.877,3.339)--(5.950,3.403)--(6.022,3.466)%
  --(6.094,3.529)--(6.166,3.592)--(6.239,3.656)--(6.311,3.719)--(6.383,3.782)--(6.455,3.846)%
  --(6.528,3.909)--(6.600,3.972)--(6.672,4.036)--(6.744,4.099)--(6.816,4.162)--(6.889,4.225)%
  --(6.961,4.289)--(7.033,4.352)--(7.105,4.415)--(7.178,4.479)--(7.250,4.542)--(7.322,4.605)%
  --(7.394,4.668)--(7.467,4.732)--(7.539,4.795)--(7.611,4.858)--(7.683,4.922)--(7.756,4.985)%
  --(7.828,5.048)--(7.900,5.111)--(7.972,5.175)--(8.044,5.238)--(8.117,5.301)--(8.189,5.365)%
  --(8.261,5.428)--(8.333,5.491)--(8.406,5.555)--(8.478,5.618)--(8.550,5.681)--(8.622,5.744)%
  --(8.695,5.808)--(8.767,5.871)--(8.839,5.934);
\draw[gp path] (1.688,7.251)--(1.688,0.985)--(8.839,0.985)--(8.839,7.251)--cycle;
\gpdefrectangularnode{gp plot 1}{\pgfpoint{1.688cm}{0.985cm}}{\pgfpoint{8.839cm}{7.251cm}}
\end{tikzpicture}
\end{center}
\caption{$L^2$-error in the density on the domain $\ub{\mathbb{X}}=[0,2]^3$ at time $\ub{t}=1$ for the D3Q7 velocity set and initial conditions (\ref{ic: 3D diatomic}).}
\label{fig: L2-dens 3D gauss diatomic}
\end{figure}

\begin{figure}[ht]
\begin{center}
\input{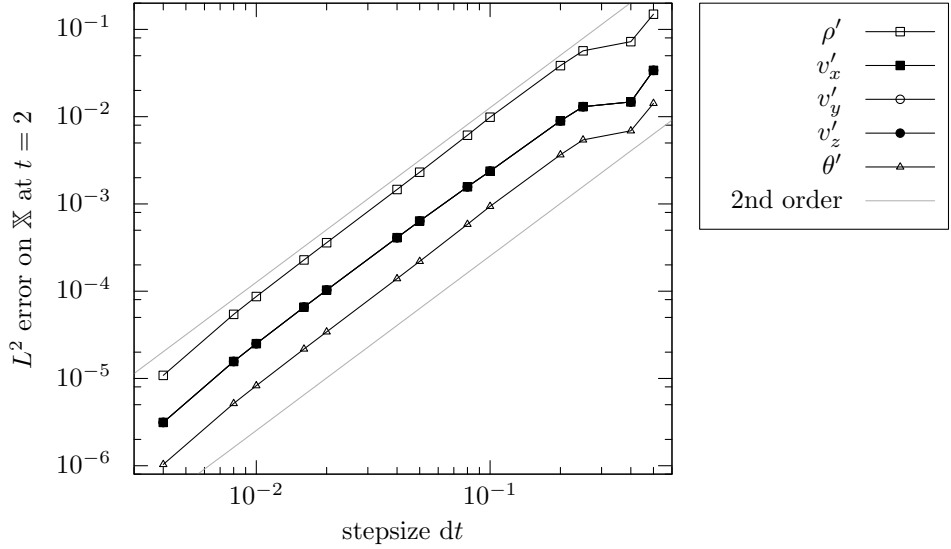}
\end{center}
\caption{$L^2$-error in the macroscopic variables for a diatomic gas on the domain $\mathbb{X}=[0,2]^3$ at time $t=2$ for the D3Q7 velocity set and initial conditions (\ref{ic: 3D gauss}).}
\label{fig: L2-dens 3D gauss diatomic nondimensional}
\end{figure}
 
 \section{Conclusions}
 \label{sec: conclusions}

In this work we derived second-order convergent Lattice Boltzmann schemes solving the Linearized Euler Equations without background velocity for arbitrary adiabatic exponents $\gamma$.
We were able to state conditions on how to choose the velocity sets in the Finite Discrete Velocity Model and the Lattice Boltzmann Method and for stability of the Lattice Boltzmann Method.
Due to our generic approach, the results from this work can be used as a blueprint for the derivation of Lattice Boltzmann Methods for different lattice geometries or adiabatic exponents.
In our future work we will focus on integration of background velocities into these Lattice Boltzmann Methods, analysis of boundary conditions, and examination of coupling strategies to Lattice Boltzmann Methods solving the incompressible Navier Stokes Equations and Finite Volume Methods solving the Linearized Euler Equations.

\section*{Acknowledgments}

We would like to thank our reviewer for pointing out the work of Banda \etal \cite{banda2006}, Junk and Yong \cite{JunkYong2009}, Junk and Yang \cite{JunkYang2009}, and Yong \cite{Yong2009862}.
This approach towards stability was previously unknown to us and we expect it to be helpful in the derivation of Lattice Boltzmann schemes for the Linearized Euler Equations with background velocity.
In addition, we would like to thank Wen-An Yong for fruitful discussion about his work on stability of hyperbolic relaxation systems.

\begin{appendix}

\section{Analysis of the LBE}
\label{app: analysis of the LBE}

We carry out the steps of deriving equations (\ref{eq: eps to 2 zeroth})-(\ref{eq: eps to 2 second}) starting from the moments of equation (\ref{eq: taylor eps to 2}) for $\zeta \in \left\{\vec{v}\to 1, \vec{v}\to v_1,..., \vec{v}\to v_D, \vec{v}\to \frac{1}{2} |\vec{v}|^2\right\}$:
$$
\nabla_{\vec{x}} \cdot \langle  \zeta \vec{v} g^{(1)} \rangle_\mathcal{S}   + \frac{1}{2} \nabla_{\vec{x}} \nabla_{\vec{x}} : \langle  \zeta \vec{v} \otimes \vec{v} g^{(0)} \rangle_\mathcal{S}   + \partial_t \langle  \zeta g^{(1)} \rangle_\mathcal{S}   + \partial_t \nabla_{\vec{x}} \cdot \langle  \zeta \vec{v} g^{(0)} \rangle_\mathcal{S}   + \frac{1}{2} \partial_t^2 \langle  \zeta g^{(0)} \rangle_\mathcal{S}   = 0
$$
We start with the case $\zeta = 1$:
\begin{align*}
0 =&\nabla_{\vec{x}} \cdot \langle  \vec{v} g^{(1)} \rangle_\mathcal{S}   + \frac{1}{2} \nabla_{\vec{x}} \nabla_{\vec{x}} : \langle  \vec{v} \otimes \vec{v} g^{(0)} \rangle_\mathcal{S}\\
&+ \partial_t\langle g^{(1)} \rangle_\mathcal{S}   + \partial_t \nabla_{\vec{x}} \cdot \langle  \vec{v} g^{(0)} \rangle_\mathcal{S}   + \frac{1}{2} \partial_t^2 \langle  g^{(0)} \rangle_\mathcal{S}  \\
=& \partial_t \rho'^{(1)} + \rho_0 \nabla_{\vec{x}} \cdot \vec{u}'^{(1)} + \frac{1}{2} \nabla_{\vec{x}} \cdot \underbrace{\left( \rho_0 \partial_t \vec{u}'^{(0)} + \theta_0 \nabla_{\vec{x}} \rho'^{(0)} + \rho_0 \nabla_{\vec{x}} \theta'^{(0)} \right)}_{=0}\\
& + \frac{1}{2} \partial_t \underbrace{\left( \partial_t \rho'^{(0)} + \rho_0 \nabla_{\vec{x}} \cdot \vec{u}'^{(0)} \right)}_{=0 }\\
=& \partial_t \rho'^{(1)} + \rho_0 \nabla_{\vec{x}} \cdot \vec{u}'^{(1)},
\end{align*}
due to the term $\langle \vec{v} \otimes \vec{v} g^{(0)}\rangle_\mathcal{S}  $ being of the following form:
\begin{align*}
\langle \vec{v} \otimes \vec{v} g^{(0)}\rangle_\mathcal{S}   &= \left( \theta_0 \rho'^{(0)} + \rho_0 \theta'^{(0)} \right) \vec{I}.
\end{align*}
We proceed with the case $\zeta = \vec{v}$:
\begin{align}
\vec{0} =& \nabla_{\vec{x}} \cdot \langle  \vec{v} \otimes \vec{v} g^{(1)} \rangle_\mathcal{S}   + \frac{1}{2} \nabla_{\vec{x}} \nabla_{\vec{x}} : \langle  \vec{v} \otimes \vec{v} \otimes \vec{v} g^{(0)} \rangle_\mathcal{S}   + \partial_t\langle \vec{v} g^{(1)} \rangle_\mathcal{S}\nonumber\\
& + \partial_t \nabla_{\vec{x}} \cdot \langle  \vec{v} \otimes \vec{v} g^{(0)} \rangle_\mathcal{S}   + \frac{1}{2} \partial_t^2 \langle  \vec{v} g^{(0)} \rangle_\mathcal{S}\nonumber\\
=& \rho_0 \partial_t \vec{u}'^{(1)} + \nabla_{\vec{x}} \cdot \langle \vec{v} \otimes \vec{v} g^{(1)} \rangle_\mathcal{S}\nonumber\\
&+ \frac{1}{2} \nabla_{\vec{x}} \cdot \left( \rho_0 \partial_t \theta'^{(0)} \vec{I} + \theta_0 \partial_t \rho'^{(0)} \vec{I} + \nabla_{\vec{x}} \cdot \langle \vec{v} \otimes \vec{v} \otimes \vec{v} g^{(0)} \rangle_\mathcal{S} \right)\nonumber\\
&+ \frac{1}{2} \partial_t \underbrace{\left( \rho_0 \partial_t \vec{u}'^{(0)} + \rho_0 \nabla_{\vec{x}} \theta'^{(0)} + \theta_0 \nabla_{\vec{x}} \rho'^{(0)} \right)}_{=0}. \label{eq: first order cont}
\end{align}
We now analyze the term $\langle  \vec{v} \otimes \vec{v} g^{(1)} \rangle_\mathcal{S}$.
From equation (\ref{eq: taylor eps to 1}) we find:
$$
g_i^{(1)} = g_i^{eq(1)} - \tau \left( \vec{c}_i \cdot \nabla_{\vec{x}} g_i^{(0)} + \partial_t g_i^{(0)} \right).
$$
Therefore we have:
\begin{align*}
\langle  \vec{v} \otimes \vec{v} g^{(1)} \rangle_\mathcal{S}   &= \langle  \vec{v} \otimes \vec{v} g^{eq(1)} \rangle_\mathcal{S}   - \tau \left( \nabla_{\vec{x}} \cdot \langle  \vec{v} \otimes \vec{v} \otimes \vec{v} g^{eq(0)}\rangle_\mathcal{S}   + \partial_t \langle  \vec{v} \otimes \vec{v} g^{eq(0)}\rangle_\mathcal{S}   \right)\\
=& \left( \theta_0 \rho'^{(1)} + \rho_0 \theta'^{(1)} \right) \vec{I}\nonumber\\
&- \tau \left( \nabla_{\vec{x}} \cdot \langle  \vec{v} \otimes \vec{v} \otimes \vec{v} g^{(0)} \rangle_\mathcal{S}  + \partial_t \left( \theta_0 \rho'^{(0)} + \rho_0 \theta'^{(0)} \right) \vec{I} \right).
\end{align*}
Now we plug these results into equation (\ref{eq: first order cont}) in order to calculate the first order moment of equation (\ref{eq: eps to 2 second}):
\begin{align*}
\vec{0}=& \rho_0 \partial_t \vec{u}'^{(1)} + \rho_0 \nabla_{\vec{x}} \theta'^{(1)} + \theta_0 \nabla_{\vec{x}} \rho'^{(1)}\\
+& \left( \frac{1}{2} - \tau \right)  \nabla_{\vec{x}} \nabla_{\vec{x}} : \langle  \vec{v} \otimes \vec{v} \otimes \vec{v} g^{(0)} \rangle_\mathcal{S}\\
+&\left( \frac{1}{2} - \tau \right) \partial_t \nabla_{\vec{x}} \left( \rho_0 \theta'^{(0)} + \theta_0 \rho'^{(0)} \right).
\end{align*}
Finally, we proceed with the case $\zeta = \frac{1}{2} \left( |\vec{v}|^2 + \beta \right)$:
\begin{align}
0 &=\nabla_{\vec{x}} \cdot \langle  \frac{1}{2} \left( |\vec{v}|^2 + \beta \right) \vec{v} g^{(1)} \rangle_\mathcal{S}   + \frac{1}{2} \nabla_{\vec{x}} \nabla_{\vec{x}} : \langle  \frac{1}{2} \left( |\vec{v}|^2 + \beta \right) \vec{v} \otimes \vec{v} g^{(0)} \rangle_\mathcal{S}\nonumber\\
&+\partial_t\langle \frac{1}{2} \left( |\vec{v}|^2 + \beta \right) g^{(1)} \rangle_\mathcal{S}   + \partial_t \nabla_{\vec{x}} \cdot \langle  \frac{1}{2} \left( |\vec{v}|^2 + \beta \right) \vec{v} g^{(0)} \rangle_\mathcal{S}   + \frac{1}{2} \partial_t^2 \langle  \frac{1}{2} \left( |\vec{v}|^2 + \beta \right) g^{(0)} \rangle_\mathcal{S}\nonumber \\
&= \frac{1}{\gamma-1} \rho_0 \partial_t \theta'^{(1)} + \frac{1}{\gamma-1} \theta_0 \partial_t \rho'^{(1)} + \nabla_{\vec{x}} \cdot \langle \frac{1}{2} \left( |\vec{v}|^2 + \beta \right) \vec{v} g^{(1)} \rangle_\mathcal{S}\nonumber\\
&+ \frac{1}{2} \nabla_{\vec{x}} \cdot \left( \frac{\gamma}{\gamma-1} \rho_0 \theta_0 \partial_t \vec{u}'^{(0)} + \nabla_{\vec{x}} \cdot \langle \frac{1}{2} \left( |\vec{v}|^2 + \beta \right) \vec{v} \otimes \vec{v} g^{(0)} \rangle_\mathcal{S} \right)\nonumber\\
&+ \frac{1}{2} \partial_t \underbrace{\left( \frac{1}{\gamma-1} \rho_0 \partial_t \theta'^{(0)} + \frac{1}{\gamma-1} \theta_0 \partial_t \rho'^{(0)} + \frac{\gamma}{\gamma-1} \rho_0 \theta_0 \nabla_{\vec{x}} \vec{u}'^{(0)} \right)}_{=0}. \label{eq: second order cont}
\end{align}
Analysis of the terms $\langle \frac{1}{2} \left( |\vec{v}|^2 + \beta \right) g^{(1)} \rangle_\mathcal{S}$ and $\langle  \frac{1}{2} \left( |\vec{v}|^2 + \beta \right) \vec{v} g^{(1)} \rangle_\mathcal{S}  $ gives:
\begin{align*}
\langle \frac{1}{2} \left( |\vec{v}|^2 + \beta \right) g^{(1)} \rangle_\mathcal{S} =& \langle \frac{1}{2} \left( |\vec{v}|^2 + \beta \right) g^{eq(1)} \rangle_\mathcal{S}\nonumber\\
&- \tau \left( \nabla_{\vec{x}} \cdot \langle \frac{1}{2} \left( |\vec{v}|^2 + \beta \right) \vec{v} g^{(0)} \rangle_\mathcal{S} + \partial_t \langle \frac{1}{2} \left( |\vec{v}|^2 + \beta \right) g^{(0)} \rangle_\mathcal{S} \right)\\
=& \frac{1}{\gamma - 1} \rho_0 \theta'^{(1)} + \frac{1}{\gamma - 1} \theta_0 \rho'^{(1)}\nonumber\\
&- \tau \underbrace{\left(\frac{1}{\gamma - 1} \rho_0 \partial_t \theta'^{(0)} + \frac{1}{\gamma - 1} \theta_0 \partial_t \rho'^{(0)}  + \frac{\gamma}{\gamma-1} \nabla_{\vec{x}} \cdot  u'^{(0)} \right)}_{=0},\\ 
\langle \frac{1}{2} \left( |\vec{v}|^2 + \beta \right) \vec{v} g^{(1)} \rangle_\mathcal{S} =& \langle \frac{1}{2} \left( |\vec{v}|^2 + \beta \right) \vec{v} g^{eq(1)} \rangle_\mathcal{S}\nonumber\\
&- \tau \left( \nabla_{\vec{x}} \cdot \langle \frac{1}{2} \left( |\vec{v}|^2 + \beta \right) \vec{v} \otimes \vec{v} g^{(0)} \rangle_\mathcal{S} + \partial_t \langle \frac{1}{2} \left( |\vec{v}|^2 + \beta \right) \vec{v} g^{(0)} \rangle_\mathcal{S} \right)\\
=& \frac{\gamma}{\gamma-1} \rho_0 \theta_0 \vec{u}'^{(1)}\nonumber\\
&- \tau \left( \nabla_{\vec{x}} \cdot \langle \frac{1}{2} \left( |\vec{v}|^2 + \beta \right) \vec{v} \otimes \vec{v} g^{(0)} \rangle_\mathcal{S} + \frac{\gamma}{\gamma-1} \rho_0 \theta_0 \partial_t \vec{u}'^{(0)} \right).
\end{align*}
Plugging this into equation (\ref{eq: second order cont}) gives the central second-order moment of equation (\ref{eq: eps to 2 second}):
\begin{align*}
0 =& \frac{1}{\gamma-1} \rho_0 \partial_t \theta'^{(1)} + \frac{1}{\gamma-1} \theta_0 \partial_t \rho'^{(1)} + \frac{\gamma}{\gamma-1} \rho_0 \theta_0 \nabla_{\vec{x}} \cdot \vec{u}'^{(1)}\\
+& \left( \frac{1}{2} - \tau \right) \frac{\gamma}{\gamma-1} \rho_0 \theta_0 \partial_t \nabla_{\vec{x}} \cdot \vec{u}'^{(0)} + \left( \frac{1}{2} - \tau \right) \nabla_{\vec{x}} \nabla_{\vec{x}} : \langle \frac{1}{2} \left( |\vec{v}|^2 + \beta \right) \vec{v} \otimes \vec{v} g^{(0)} \rangle_\mathcal{S}
\end{align*}

\end{appendix}

\section*{Literature}

\bibliographystyle{abbrv}
\bibliography{LBM}

\begin{thebibliography}{10}

\bibitem{bailly2000numerical}
C.~Bailly and D.~Juv{\'e}.
\newblock Numerical solution of acoustic propagation problems using linearized
  euler equations.
\newblock {\em AIAA journal}, 38(1):22--29, 2000.

\bibitem{petsc-user-ref}
S.~Balay, M.~F. Adams, J.~Brown, P.~Brune, K.~Buschelman, V.~Eijkhout, W.~D.
  Gropp, D.~Kaushik, M.~G. Knepley, L.~C. McInnes, K.~Rupp, B.~F. Smith, and
  H.~Zhang.
\newblock {PETS}c users manual.
\newblock Technical Report ANL-95/11 - Revision 3.4, Argonne National
  Laboratory, 2013.

\bibitem{petsc-web-page}
S.~Balay, M.~F. Adams, J.~Brown, P.~Brune, K.~Buschelman, V.~Eijkhout, W.~D.
  Gropp, D.~Kaushik, M.~G. Knepley, L.~C. McInnes, K.~Rupp, B.~F. Smith, and
  H.~Zhang.
\newblock {PETS}c web page.
\newblock \url{http://www.mcs.anl.gov/petsc}, 2014.

\bibitem{petsc-efficient}
S.~Balay, W.~D. Gropp, L.~C. McInnes, and B.~F. Smith.
\newblock Efficient management of parallelism in object oriented numerical
  software libraries.
\newblock In E.~Arge, A.~M. Bruaset, and H.~P. Langtangen, editors, {\em Modern
  Software Tools in Scientific Computing}, pages 163--202. Birkh{\"{a}}user
  Press, 1997.

\bibitem{banda2006}
M.~K. Banda, W.-A. Yong, and A.~Klar.
\newblock A stability notion for lattice boltzmann equations.
\newblock {\em SIAM Journal on Scientific Computing}, 27(6):2098--2111, 2006.

\bibitem{Bardos2000}
C.~Bardos, F.~Golse, and C.~D. Levermore.
\newblock The acoustic limit for the boltzmann equation.
\newblock {\em Archive for Rational Mechanics and Analysis}, 153(3):177--204,
  June 2000.

\bibitem{bauer1960}
F.~L. Bauer and C.~T. Fike.
\newblock Norms and exclusion theorems.
\newblock {\em Numerische Mathematik}, 2(1):137--141, 1960.

\bibitem{bernsdorf1999comparison}
J.~Bernsdorf, F.~Durst, and M.~Sch{\"a}fer.
\newblock Comparison of cellular automata and finite volume techniques for
  simulation of incompressible flows in complex geometries.
\newblock {\em International Journal for Numerical Methods in Fluids},
  29(3):251--264, 1999.

\bibitem{BGK1945}
P.~L. Bhatnagar, E.~P. Gross, and M.~Krook.
\newblock A model for collision processes in gases. i. small amplitude
  processes in charged and neutral one-component systems.
\newblock {\em Physical review}, 94(3):511, 1954.

\bibitem{bogey2002computation}
C.~Bogey, C.~Bailly, and D.~Juv{\'e}.
\newblock Computation of flow noise using source terms in linearized euler's
  equations.
\newblock {\em AIAA journal}, 40(2):235--243, 2002.

\bibitem{buick1998lattice}
J.~Buick, C.~Greated, and D.~Campbell.
\newblock Lattice {BGK} simulation of sound waves.
\newblock {\em Europhysics Letters}, 43(3):235, 1998.

\bibitem{Cercignani1988}
C.~Cercignani.
\newblock {\em The Boltzmann Equation and Its Applications}.
\newblock Applied Mathematical Sciences Series. Springer, 1988.

\bibitem{HTheoremInstability}
H.~Chen and C.~Teixeira.
\newblock H-theorem and origins of instability in thermal lattice boltzmann
  models.
\newblock {\em Computer Physics Communications}, 129(1--3):21 -- 31, 2000.

\bibitem{clawpack}
{Clawpack Development Team}.
\newblock Clawpack software, 2013.
\newblock Version 5.0.

\bibitem{crouse2006fundamental}
B.~Crouse, D.~Freed, G.~Balasubramanian, S.~Senthooran, P.-T. Lew, and
  L.~Mongeau.
\newblock {\em Fundamental Aeroacoustics Capabilities of the Lattice-Boltzmann
  Method}.
\newblock American Institute of Aeronautics and Astronautics, 2006.

\bibitem{DegondJin2005}
P.~Degond and S.~Jin.
\newblock A smooth transition model between kinetic and diffusion equations.
\newblock {\em SIAM Journal on Numerical Analysis}, 42(6):pp. 2671--2687, 2005.

\bibitem{Degond2005}
P.~Degond, S.~Jin, and L.~Mieussens.
\newblock {A smooth transition model between kinetic and hydrodynamic
  equations}.
\newblock {\em Journal of Computational Physics}, 209(2):665--694, 2005.

\bibitem{dellar2001bulk}
P.~J. Dellar.
\newblock Bulk and shear viscosities in lattice boltzmann equations.
\newblock {\em Physical Review E}, 64(3):031203, August 2001.

\bibitem{dellar2008two}
P.~J. Dellar.
\newblock Two routes from the boltzmann equation to compressible flow of
  polyatomic gases.
\newblock {\em Progress in Computational Fluid Dynamics, an International
  Journal}, 8(1):84--96, 2008.

\bibitem{DissManuel}
M.~Hasert.
\newblock {\em Multi-scale Lattice Boltzmann Simulations on Distributed
  Octrees}.
\newblock PhD thesis, RWTH Aachen University, 2014.

\bibitem{hasert2011towards}
M.~Hasert, J.~Bernsdorf, and S.~Roller.
\newblock Towards aeroacoustic sound generation by flow through porous media.
\newblock {\em Philosophical Transactions of the Royal Society A: Mathematical,
  Physical and Engineering Sciences}, 369(1945):2467--2475, 2011.

\bibitem{HeLuo1997}
X.~He and L.-S. Luo.
\newblock \textit{A priori} derivation of the lattice boltzmann equation.
\newblock {\em Physical Review E}, 55(6):R6333--R6336, 1997.

\bibitem{FELBM}
J.~C. Jo, K.~W. Rhoh, and Y.~W. Kwon.
\newblock Finite element based formulation of the lattice boltzmann equation.
\newblock {\em Nuclear Engineering and Technology}, 41(5):649--654, 2009.

\bibitem{Junk2005}
M.~Junk, A.~Klar, and L.-S. Luo.
\newblock {Asymptotic analysis of the lattice Boltzmann equation}.
\newblock {\em Journal of Computational Physics}, 210(2):676--704, Dec. 2005.

\bibitem{JunkYang2009}
M.~Junk and Z.~Yang.
\newblock Convergence of lattice boltzmann methods for navier--stokes flows in
  periodic and bounded domains.
\newblock {\em Numerische Mathematik}, 112(1):65--87, 2009.

\bibitem{JunkYong2009}
M.~Junk and W.-A. Yong.
\newblock Weighted {$L^{2}$}-stability of the lattice boltzmann method.
\newblock {\em SIAM Journal on Numerical Analysis}, 47(3):1651--1665, 2009.

\bibitem{EntropyFunctionsLBM}
I.~V. Karlin, A.~Ferrante, and H.~C. {\"O}ttinger.
\newblock Perfect entropy functions of the lattice boltzmann method.
\newblock {\em EPL (Europhysics Letters)}, 47(2):182--188, 1999.

\bibitem{kataoka2004}
T.~Kataoka and M.~Tsutahara.
\newblock Lattice boltzmann model for the compressible navier-stokes equations
  with flexible specific-heat ratio.
\newblock {\em Physical review E}, 69(3):035701, 2004.

\bibitem{pyclaw-sisc}
D.~I. Ketcheson, K.~T. Mandli, A.~J. Ahmadia, A.~Alghamdi, M.~{Quezada de
  Luna}, M.~Parsani, M.~G. Knepley, and M.~Emmett.
\newblock {P}y{C}law: Accessible, extensible, scalable tools for wave
  propagation problems.
\newblock {\em SIAM Journal on Scientific Computing}, 34(4):C210--C231, Nov.
  2012.

\bibitem{Lallemand2000a}
P.~Lallemand and L.~Luo.
\newblock {Theory of the lattice boltzmann method: dispersion, dissipation,
  isotropy, galilean invariance, and stability}.
\newblock {\em Phys. Rev. E}, 61(6 Pt A):6546--62, June 2000.

\bibitem{Lallemand2003}
P.~Lallemand and L.-S. Luo.
\newblock Theory of the lattice boltzmann method: Acoustic and thermal
  properties in two and three dimensions.
\newblock {\em Physical Review E}, 68(3):036706, Sept. 2003.

\bibitem{LangsethLeVeque00}
J.~O. Langseth and R.~J. LeVeque.
\newblock A wave-propagation method for three-dimensional hyperbolic
  conservation laws.
\newblock {\em J. Comput. Phys.}, 165:126--166, 2000.

\bibitem{CGLBM}
T.~Lee and C.-L. Lin.
\newblock A characteristic galerkin method for discrete boltzmann equation.
\newblock {\em Journal of Computational Physics}, 171(1):336--356, 2001.

\bibitem{leveque1997}
R.~J. LeVeque.
\newblock Wave propagation algorithms for multidimensional hyperbolic systems.
\newblock {\em Journal of Computational Physics}, 131(2):327--353, 1997.

\bibitem{LeVeque2007}
R.~J. LeVeque.
\newblock {\em Finite Difference Methods for Ordinary and Partial Differential
  Equations}.
\newblock Society for Industrial and Applied Mathematics, 2007.

\bibitem{pyclaw}
K.~T. Mandli and D.~I.~e. Ketcheson.
\newblock Py{C}law software, 2011.
\newblock Version 5.0.

\bibitem{mankbadi1998use}
R.~Mankbadi, R.~Hixon, S.~Shih, and L.~Povinelli.
\newblock Use of linearized euler equations for supersonic jet noise
  prediction.
\newblock {\em AIAA journal}, 36(2):140--147, 1998.

\bibitem{Marie2009}
S.~Marie, D.~Ricot, and P.~Sagaut.
\newblock {Comparison between lattice Boltzmann method and Navier--Stokes high
  order schemes for computational aeroacoustics}.
\newblock {\em Journal of Computational Physics}, 228(4):1056--1070, Mar. 2009.

\bibitem{roller2005}
S.~Roller, T.~Schwartzkopff, R.~Fortenbach, M.~Dumbser, and C.-D. Munz.
\newblock Calculation of low mach number acoustics: a comparison of {MPV},
  {EIF} and linearized euler equations.
\newblock {\em ESAIM: Mathematical Modelling and Numerical Analysis},
  39(03):561--576, 2005.

\bibitem{SterlingChen96}
J.~D. Sterling and S.~Chen.
\newblock Stability analysis of lattice boltzmann methods.
\newblock {\em Journal of Computational Physics}, 123(1):196--206, 1996.

\bibitem{SucciBook}
S.~Succi.
\newblock {\em {The Lattice Boltzmann Equation for Fluid Dynamics and Beyond
  (Numerical Mathematics and Scientific Computation)}}.
\newblock Numerical mathematics and scientific computation. Oxford University
  Press, USA, Aug. 2001.

\bibitem{toro1999riemann}
E.~F. Toro.
\newblock {\em Riemann Solvers and Numerical Methods for Fluid Dynamics},
  volume~16.
\newblock Springer, 1999.

\bibitem{Trefethen1996}
L.~N. Trefethen.
\newblock \textit{Finite Difference and Spectral Methods for Ordinary and
  Partial Differential Equations}.
\newblock http://people.maths.ox.ac.uk/trefethen/4all.pdf, 1996.

\bibitem{Yong2009862}
W.-A. Yong.
\newblock An onsager-like relation for the lattice boltzmann method.
\newblock {\em Computers \& Mathematics with Applications}, 58(5):862 -- 866,
  2009.

\end{thebibliography}

\end{document}